\definecolor{myBlue}{rgb}{0.0,0.0,0.55}
\definecolor{green}{rgb}{0.0,0.7,0.2}
\newtheorem{theorem}{Theorem}[section]
\newtheorem{lemma}[theorem]{Lemma}
\newtheorem{remark}[theorem]{Remark}
\newcommand{\bs}{\boldsymbol}
\newcommand{\mcal}{\mathcal}
\newcommand{\vertiii}[1]{{\left\vert\kern-0.25ex\left\vert\kern-0.25ex\left\vert #1 
    \right\vert\kern-0.25ex\right\vert\kern-0.25ex\right\vert}}
\begin{document}
\title[Convergence analysis for saddle point systems]{Convergence Analysis for A Class of Iterative Methods for Solving Saddle Point Systems}
\thanks{L. Chen was supported by NSF Grant DMS-1418934, in part by the Sea Poly Project of Beijing Overseas Talents and in part by National Natural Science Foundation of China (Grant No. 11671159). Y. Wu was supported by the National Natural Science Foundation of China (11501088), the Fundamental Research Funds for the Central Universities of China (ZYGX2015J097, UESTC)}
\author{Long Chen and Yongke Wu}

\begin{abstract}
Convergence analysis of a nested iterative scheme proposed by Bank,Welfert and Yserentant (BWY) ([Numer. Math., 666: 645-666, 1990])  for solving saddle point systems is presented. It is shown that this scheme converges under weaker conditions: the contraction rate for solving the $(1,1)$ block matrix is bound by $(\sqrt{5}-1)/2$. Similar convergence result is also obtained for a class of inexact Uzawa method with even weaker contraction bound $\sqrt{2}/2$. Preconditioned generalized minimal residual (GMRes) method using the BWY iteration as a preconditioner is shown to converge with realistic assumptions.
\end{abstract}

\date{\today}


\address[L.~Chen]{Department of Mathematics, University of California at Irvine, Irvine, CA 92697, USA\\
Beijing Institute for Scientific and Engineering Computing, Beijing University of Technology, Beijing, 100124, China}
\email{chenlong@math.uci.edu}

\address[Y.~Wu]{School of Mathematical Sciences, University of Electronic Science and Technology of China, Chengdu 611731, China.}
\email{wuyongke1982@sina.com}

\maketitle

\section{Introduction}
Saddle point systems are often arising in a variety of scientific and engineering applications such as mixed finite element methods for elliptic equations (e.g. Poisson, Stokes, and elasticity problems) and constrained optimization problems where a Lagrange multiplier is used to impose the constraint~\cite{Benzi.M;Golub.G;Liesen.J2005,Boffi2013}. Solving saddle point systems is thus an important topic in the scientific computing. 

We shall consider iterative methods for solving the following saddle point system
\begin{equation}
\label{saddle_pro}
\begin{pmatrix}
A & B^{\intercal} \\ 
B  & - C
\end{pmatrix}
\begin{pmatrix}
u \\ p
\end{pmatrix} = 
\begin{pmatrix}
f \\ g
\end{pmatrix},
\end{equation}
where $A \in \mathbb R^{n \times n}$ is a symmetric positive definite (SPD) matrix, $B \in \mathbb R^{m\times n}\ (m\leq n)$ is a full rank matrix, and $ C \in \mathbb R^{m \times m}$ is symmetric positive and semi-definite matrix. We assume that system~\eqref{saddle_pro} is well-posed and thus $BA^{-1}B^{\intercal} + C$, the Schur complement of $A$, is also symmetric and positive definite. 

A class of iterative methods was introduced by Bank, Welfert, and Yserentant~\cite{Bank1990}. Given the current approximation $(u^k, p^k)$, to compute $(u^{k+1}, p^{k+1})$, the BWY method consists of three steps:
\begin{align}
\label{BWY_1} u^{k+1/2} &= u^{k} + R_A(f - Au^k - B^{\intercal}p^k),\\
\label{BWY_2} p^{k+1} &= p^{k} - R_S(g - Bu^{k+1/2} + Cp^k),\\
\label{BWY_3} u^{k+1} &= u^{k} + R_A(f - Au^k - B^{\intercal}p^{k+1}),
\end{align}
where $R_A \in \mathbb R^{n\times n}$ is an SPD approximation of $A^{-1}$ but the SPD matrix $R_S^{-1} \in \mathbb R^{m \times m}$ is for a different Schur complement $S: = B\, R_AB^{\intercal}+C$.

Let $\delta = \rho(I - R_AA)$ and $\gamma = \rho(I - R_SS)$, where $\rho(\cdot)$ is the spectral radius of a matrix. In \cite{Bank1990}, Bank, Welfert, and Yserentant proved that under the norm
$$
\vertiii{\cdot}^2 := \|\cdot\|_{R_A^{-1}}^2 +\frac{1-\gamma}{2(1+\gamma)} \|\cdot\|_{S}^2, 
$$ 
the error operator $\mathcal E$ of the BWY method satisfies
\begin{equation}\label{BWYestimate}
\vertiii{\mathcal E} \leq \max\left\{\delta,\frac{2\gamma}{1 - \gamma}  \right\}.
\end{equation}
Therefore the BWY method is convergent when $\gamma < \frac{1}{3}$ and $\delta  < 1$. 
In \cite{Tong1998}, Tong and Sameh weakened the conditions stated in \cite{Bank1990} and showed that using another weight in the norm 
$$
\vertiii{\cdot}^2 := \|\cdot\|_{R_A^{-1}}^2 +  \frac{\gamma}{\delta(1+\gamma)} \|\cdot\|_{S}^2,
$$
the error operator $\mathcal E$ of the BWY method was bounded by
\begin{equation}\label{TB}
\left\{\begin{array}{ll}
\vertiii{\mathcal E} < 1 & \text{when}\quad \delta < 1,\gamma < \frac{1}{1+2\delta},\\
\vertiii{\mathcal E} < \delta & \text{when}\quad \delta < 1,\gamma \leq \frac{\delta}{2+\delta}.
\end{array}
\right.
\end{equation}
This improved convergence analysis explains why in practice the BWY method is often convergent even though $\gamma > \frac{1}{3}$. To achieve a robust convergence rate, however, restriction on $\gamma \leq \delta /(2+\delta)$ is still needed. It is also worthing to note that in \cite{Bank1990,Tong1998} only the case $C=0$ is considered. 

We shall present a different convergence analysis of the BWY method. Introduce $\bar R_A = 2R_A - R_AA\, R_A$, the so-called symmetrization of $R_A$, and the corresponding Schur complement $\bar S := B\bar R_AB^{\intercal} + C$. A crucial point is to view $R_S^{-1}$ as an approximation of $\bar S$ instead of $S$. 

Assume that 
$$R_A^{-1} > A, R_S^{-1} \geq \bar S, \text{ and }\delta < \frac{\sqrt{5}-1}{2},$$ 
then we shall prove that 
 \begin{align}\label{error}
 \|u - u^k\|_{R_A^{-1}}^2 + \|p-p^k\|_{R_S^{-1}}^2 \leq 9 \rho_1^{2k}\left( \|u-u^0\|_{R_A^{-1}}^{2} + \|p-p^0\|_{R_S^{-1}}^2 \right),
 \end{align} 
where $\rho_1 = \rho_1(\delta,\bar \gamma)<1$ with $\bar \gamma = \rho(I - R_S\bar S)$ (detailed formulae can be found in Section 2.1).
A convergence result similar to \eqref{error} can be also obtained with assumptions $R_A^{-1} >  A, R_S^{-1} \geq S, \text{ and }\delta < \frac{1}{2}.$
Comparing with the contraction result \eqref{BWYestimate} and \eqref{TB}, we relax the condition for $\bar \gamma$, which is simply $\bar \gamma <1$ as implied by the assumption $R_S^{-1} \geq \bar{S}$, but impose a stronger condition on the contraction factor $\delta$ for solving $A$. As $A$ is available, the assumption $R_A^{-1} > A$ can be easily satisfied by using multiplicative methods e.g. symmetric Gauss-Seidel iteration or V-cycle multigrid methods with symmetric Gauss-Seidel smoothers. The condition $R_S^{-1} \geq \bar S$ can be satisfied by re-scaling. 


We then consider a variant of the BWY method and call it symmetrized inexact Uzawa (SIUM) method:
\begin{align}
\label{SIU_1} u^{k+1/2} &= u^{k} + R_A(f - Au^k - B^{\intercal}p^k),\\
\label{SIU_2} p^{k+1} &= p^{k} - R_S(g - Bu^{k+1/2} + Cp^k),\\
\label{SIU_3} u^{k+1} &= u^{k+1/2} + R_A(f - Au^{k+1/2} - B^{\intercal}p^{k+1}).
\end{align}
The change is in the third step. In \eqref{SIU_3}, $u^{k+1/2}$ is used while in \eqref{BWY_3}, $u^k$ is used. The approximation $u^{k+1}$ obtained in \eqref{SIU_3} is thus expected to be better. Indeed we can prove a slightly better convergence result. Assume $R_A^{-1} >  A$, $R_S^{-1}\geq \bar S $, and $\delta < \frac{\sqrt{2}}{2}$, then
 \begin{align}\label{errorSIU}
 \|u - u^k\|_{R_A^{-1}}^2 + \|p-p^k\|_{R_S^{-1}}^2 \leq 9 \rho_2^{2k}\left( \|u-u^0\|_{R_A^{-1}}^{2} + \|p-p^0\|_{R_S^{-1}}^2 \right),
 \end{align} 
 where $\rho_2 = \rho_2(\delta,\bar\gamma)<1$. The improvement is the relaxed upper bound of $\delta$ from $(\sqrt{5}-1)/2$ to $\sqrt{2}/2$. 
Again a convergence result similar to \eqref{errorSIU} can be  obtained with assumptions $R_A^{-1} > A$, $R_S^{-1}\geq S$, and $\delta < \frac{1}{2}$.

We emphasize that the condition on $\delta$ bounded by $(\sqrt{5}-1)/2\approx 0.618$ or $\sqrt{2}/2\approx 0.707$ can be easily satisfied by, for example, using only one multigrid V-cycle for solving $A$. For the iterative solver of the Schur complement, we simply require that it is convergent, i.e., $\bar \gamma < 1$ (or $\gamma < 1$ with s slightly smaller upper bound for $\delta$). While to apply convergence results in \cite{Bank1990}, iterative solvers for the Schur complement equation should be convergent with a contraction rate less than $1/3$. In most scenario, comparing with solving $Ax=b$, the Schur complement equation is much harder to solve since the Schur complement may not be formed explicitly. 

We shall also apply our convergence analysis to a special class of inexact Uzawa method (IUM). Given the current approximation $(u^k, p^k)$, to compute $(u^{k+1}, p^{k+1})$, IUM consists of two steps
\begin{align}
\label{IUM_1} u^{k+1} &= u^{k} + \bar R_A(f - Au^k - B^{\intercal}p^k),\\
\label{IUM_2} p^{k+1} &= p^{k} - R_S(g - Bu^{k+1} + Cp^k),
\end{align}
Here we require $\bar R_A$, the inexact solver in IUM, is a symmetrized smoothers which can be thought of as applying a symmetric solver $R_A$ twice. Write two consecutive steps of IUM and rearrange the approximation of $u$ and $p$, we will get the symmetrized inexact Uzawa iteration; details can be found in Section 2.3. Therefore we can prove the contraction of a special class of IUM in a similar form of \eqref{errorSIU} with a relaxed assumption $\delta < \sqrt{2}/2$ comparing with the bound $\delta < 1/3$ in the literature~\cite{Bramble1997,Cheng.X2000,Hu.Q;Zou.J2002,Cao2003,Bacuta2006a}. 

When $R_A = A^{-1}$, both BWY and IUM methods can be interpreted as iterative methods for solving the Schur complement equation $(BA^{-1}B^{\intercal} + C) p = BA^{-1}f-g$. Step \eqref{IUM_2} updating pressure is often called the outer iteration and step \eqref{IUM_1} is the inner iteration. When $R_A$ is sufficiently close to $A^{-1}$, e.g. applying sufficiently many inner iterations, one could expect the convergence of these methods. Our results imply that in practice one inner iteration is usually enough.
 
These iterative methods can be used as preconditioners in the generalized minimal residual (GMRes) method for solving the saddle point system~\eqref{saddle_pro}. For example, the BWY method corresponds to the approximate block factorization (ABF) preconditioner. Using the knowledge we learned from the convergence analysis of the BWY method, we shall prove that the preconditioned GMRes with the ABF preconditioner is convergent under assumptions
\begin{equation}\label{eq:GMRESassumption}
\delta < 1, \quad \kappa_1 R_S^{-1} \leq S \leq \kappa_2 R_S^{-1}.
\end{equation}
We achieve this by showing the field-of-values-equivalences of matrices \cite{Elman1982,Saad2003,Loghin;Wathen2004}, which is a general approach to analyze Krylov subspace methods. 

The number of iterations of preconditioned GMRes will of course depend on $\delta$ and the spectral equivalent constants $\kappa_1$ and $\kappa_2$. To be an effective preconditioner, it is preferable these parameters are bounded uniformly to the size of the matrix. A uniform bound for $\delta$ can be easily obtained using a V-cycle multigrid method for $A$. In most applications, the difficulty is to construct $R_S$ which is spectrally equivalent to the Schur complement $S$ but easier to compute. Recent application to mixed finite element methods for elliptic systems can be found in~\cite{Rusten1996Interior,Mardal2011a,Chen;Wu;Zhong:Zhou2016,chen2016fast}. 

We conclude the introduction by the following notation. Recall that $R_A$ and $R_S$ are SPD matrices. The symmetrization $\bar R_A = 2R_A - R_AA\, R_A$ satisfies the relation 
$$
I - \bar R_AA = (I - R_AA)^2.
$$
Denote by
\begin{align*}
&\mathcal A & = &
\begin{pmatrix}
A & B^{\intercal} \\ 
B  & - C
\end{pmatrix} ,&
&\mathcal L & = & 
\begin{pmatrix}
I  &  0 \\ B\, R_A & I
\end{pmatrix} ,&
&\mathcal U  & = &
\begin{pmatrix}
I & R_A B^{\intercal} \\ 0 & I
\end{pmatrix} ,&
\\
& \mathcal H & = &
\begin{pmatrix}
R_A^{-1} & 0 \\ 0 & -R_S^{-1}
\end{pmatrix} ,&
&\mathcal D  & = &
\begin{pmatrix}
R_A^{-1}  & 0 \\ 0  & R_S^{-1}
\end{pmatrix} ,&
&\mathcal J & = & 
\begin{pmatrix}
I  &  0 \\ 0  & -I
\end{pmatrix}. &
\end{align*} 
Obviously $\mathcal L = \mathcal U^{\intercal}$. It is also straightforward to get the block factorization
\begin{equation}\label{eq:Ahat}
\hat{\mathcal A} : = \mathcal L\, \mathcal H \, \mathcal U = 
\begin{pmatrix}
R_A^{-1} & B^{\intercal} \\ 
B  & B\, R_A \, B^{\intercal} -R_S^{-1}
\end{pmatrix}.
\end{equation}
Using this block factorization, $\hat{\mathcal A}^{-1}$ can be efficiently computed by inverting a lower triangular system $\mathcal L\, \mathcal H$ and then a upper one $\mathcal U$. The sign of the diagonal block in matrix $\mathcal H$ is from the saddle point structure. $\mathcal D = \mathcal H \, \mathcal J$ is block diagonal and SPD, and usually used as the norm in the analysis of iterative methods and preconditioners for $\mathcal A$. Our analysis shows that $  \mathcal L\, \mathcal D \, \mathcal U $ defines a better norm, where $\mathcal L$ and $\mathcal U$ serve as a change of basis. 

Finally we remark that Sch\"oberl and Zulehner~\cite{Schoberl2003} and John, R\"ude, Wohlmuth and Zulehner~\cite{blocksmootherZulehner2016} proved that the BWY iteration is also a good smoother under the assumptions $R_A^{-1} \geq A$ and $R_S^{-1} \geq S$ and thus can be used to construct efficient multigrid methods for solving the saddle point system.

We use $(\cdot,\cdot)$ for the standard $l_2$-inner product of vectors. For any symmetric and positive definite matrix $M \in \mathbb R^{r\times r}$ and vectors $\bs x,\ \bs y \in \mathbb R^r$, we define 
$$
(\bs x,\bs y)_{M} = (M\bs x,\bs y)\qquad\text{and}\qquad \|\bs x\|_M = (\bs x,\bs x)_M^{1/2}.
$$ 
We say a matrix $T \in \mathbb R^{r\times r}$ is symmetric with respect to the inner product $(\cdot,\cdot)_M$, if 
$$
(T\bs x,\bs y)_M = (\bs x, T\bs y)_M\qquad\forall \bs x,\ \bs y \in \mathbb R^r.
$$
It is straightforward to verify that if both $A$ and $M$ are symmetric, then $AM$ is symmetric in $(\cdot,\cdot)_M$ and $(\cdot,\cdot)_{A^{-1}}$ inner products when the inner product is well defined.

For any symmetric matrices $A$ and $M$ with same sizes, we use $A\leq M$ to denote that $M - A$  is a positive semi-definite matrix and $A < M$ to denote that $M - A$ is a positive definite matrix. Orderings $\geq$ and $>$ are defined similarly.

The rest of the paper is organized as follows. In section 2, we present the convergence analysis of the BWY method and point out the generalization to the inexact Uzawa methods. In section 3, we construct approximate block factorization preconditioners for system \eqref{saddle_pro} and prove the so-called field-of-values equivalence which implies the convergence of GMRes method.

\section{Convergence analysis}
In this section, we will present our convergence analysis of the BWY method in detail and point out the main difference when apply to inexact Uzawa methods.

\subsection{Convergence of the BWY method}
The original saddle point system $\mathcal A$ can be factorized as
\begin{equation}\label{eq:exacfactorization}
\begin{pmatrix}
A & B^{\intercal} \\ B  & -C
\end{pmatrix}
=
\begin{pmatrix}
A  & 0 \\ B  & -S_A
\end{pmatrix}
\begin{pmatrix}
I  & A^{-1}B^{\intercal} \\ 0  &  I
\end{pmatrix},\quad\text{with}\quad S_A = BA^{-1}B^{\intercal} + C.
\end{equation}
As we mentioned in the introduction, the BWY method can be interpreted as inverting the block factorization $\hat{\mathcal A} = \mathcal L\, \mathcal H \, \mathcal U$ as an approximation of the factorization in \eqref{eq:exacfactorization}. 

Given an initial guess $(u^k,p^k)^{\intercal}$, we compute the residual $(r_u,r_p)^{\intercal}$ first, then replace matrices $A^{-1}$ and $S_A^{-1}$ by symmetric and positive definite matrices $R_A$ and $R_S$, respectively. We compute the correction $(e_u,e_p)^{\intercal}$ by inverting a lower triangulation system of $\mathcal L\, \mathcal H$, i.e.,
$$
\begin{pmatrix}
e_u \\ e_p
\end{pmatrix} = 
\begin{pmatrix}
R_A^{-1}  & 0   \\ B  &  -R_S^{-1}
\end{pmatrix}^{-1}
\begin{pmatrix}
r_u  \\ r_p
\end{pmatrix},
$$
and then transfer the correction $(e_u,e_p)^{\intercal}$ by the matrix $\mathcal U^{-1} = \begin{pmatrix}
I  &-R_AB^{\intercal} \\ 0  & I
\end{pmatrix}$. Combination of these two steps is equivalent to computing $\hat{\mathcal A}^{-1}$ by the block factorization \eqref{eq:Ahat}. 
%

The error operator for the BWY method is thus
$$
\mathcal E_{(u,p)} = \mathcal I - \hat{\mathcal A}^{-1} \mathcal A. 
$$
Although both $\hat{\mathcal A}$ and $\mathcal A$ are symmetric, they are not positive definite. 
Motivated by the transforming smoothers \cite{Brandt1979Multigrid,Wittum1990On},
we introduce a new variable $(v, q)^{\intercal}$ by change of variables
\begin{equation}\label{eq:vu}
\begin{pmatrix}
v \\ q
\end{pmatrix}
:= \mcal U
\begin{pmatrix}
u \\ p
\end{pmatrix}.
\end{equation}
The BWY iteration can be understood as the iteration of $(v,q)^{\intercal}$ in the form 
\begin{equation}
\label{eq:iter_meth}
\begin{pmatrix}
v^{k+1} \\ q^{k+1}
\end{pmatrix} =  
\begin{pmatrix}
v^k \\ q^k
\end{pmatrix} + \mathcal H^{-1} \mathcal L^{-1} \left(
\begin{pmatrix}
f \\ g   
\end{pmatrix} - 
\mathcal A \, \mathcal U^{-1} 
\begin{pmatrix}
v^k \\ q^k
\end{pmatrix}
\right),
\end{equation}
and then transfer to $(u,p)^{\intercal}$ as
\begin{equation}\label{eq:vuk}
\begin{pmatrix}
u^{k+1} \\ p^{k+1}
\end{pmatrix} = 
\mathcal U^{-1}
\begin{pmatrix}
v^{k+1} \\ q^{k+1}
\end{pmatrix}.
\end{equation}
The error operator for iteration \eqref{eq:iter_meth} for $(v, q)^{\intercal}$ variable is
\begin{equation}
\label{eq:err}
\mathcal E = \mathcal I - \mathcal H^{-1}\mathcal L^{-1}\mathcal A \, \mathcal U^{-1} = \mathcal U \mathcal E_{(u,p)} \mathcal U^{-1},
\end{equation}
which is the representation of $\mathcal E_{(u,p)}$ in the changed basis.

We aim to prove that iteration \eqref{eq:iter_meth} is convergent in $\|\cdot\|_{\mathcal D}$ norm, i.e., 
\begin{equation}
\|\mathcal E \bs x\|_{\mathcal D} \leq \rho \|\bs x\|_{\mathcal D},\qquad\forall\ \bs x = (v, q)^{\intercal} \in \mathbb R^{n+m},
\end{equation} 
where $\rho \in (0,1)$ is a constant independent of $\bs x$, and then transfer back to $(u, p)^{\intercal}$ by estimating the bound of $\|\, \mathcal U\|$ and $\|\, \mathcal U^{-1}\|$.

We first symmetrize the error operator $\mathcal E$.
\begin{lemma}\label{lem:F}
Let
$$
\mathcal E_{\mathcal A} = 
\begin{pmatrix}
R_A^{-1} - A & 0 \\ 0 & S - R_S^{-1}.
\end{pmatrix},
$$
and
$$
\mcal F = \mathcal D^{-1/2}\mathcal J \,\mathcal L^{-1} \mathcal E_{\mathcal A}\, \mathcal U^{-1} \mathcal J \, \mathcal D^{-1/2}.
$$
There holds
$$
\|\mathcal E \|_{\mathcal D} \leq \rho(\mathcal F).
$$
\end{lemma}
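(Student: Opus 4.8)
The plan is to reduce the error operator $\mathcal{E}$ to the ``defect'' matrix $\mathcal{E}_{\mathcal{A}}$ and then to recognize that, after a $\mathcal{D}^{1/2}$-conjugation, $\mathcal{E}$ becomes orthogonally equivalent to a \emph{symmetric} matrix, for which the spectral norm and spectral radius coincide.

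First I would record the one-line block identity $\hat{\mathcal{A}} - \mathcal{A} = \mathcal{E}_{\mathcal{A}}$: subtracting $\mathcal{A}$ from the factorization \eqref{eq:Ahat} leaves only the diagonal blocks $R_A^{-1} - A$ and $B R_A B^{\intercal} + C - R_S^{-1} = S - R_S^{-1}$. Substituting $\mathcal{A} = \mathcal{L}\mathcal{H}\mathcal{U} - \mathcal{E}_{\mathcal{A}}$ into $\mathcal{E} = \mathcal{I} - \mathcal{H}^{-1}\mathcal{L}^{-1}\mathcal{A}\,\mathcal{U}^{-1}$ and cancelling gives
$$
\mathcal{E} = \mathcal{H}^{-1}\mathcal{L}^{-1}\mathcal{E}_{\mathcal{A}}\,\mathcal{U}^{-1}.
$$

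Next, since $\mathcal{D}$ is SPD, $\|\mathcal{E}\|_{\mathcal{D}} = \|\mathcal{D}^{1/2}\mathcal{E}\mathcal{D}^{-1/2}\|_2$. I would write $\mathcal{H} = \mathcal{D}\mathcal{J}$, so $\mathcal{H}^{-1} = \mathcal{J}\mathcal{D}^{-1}$, and use that $\mathcal{J}$ commutes with any block-diagonal matrix (in particular with $\mathcal{D}^{\pm 1/2}$) to get $\mathcal{D}^{1/2}\mathcal{H}^{-1} = \mathcal{J}\mathcal{D}^{-1/2}$. Hence
$$
\mathcal{D}^{1/2}\mathcal{E}\mathcal{D}^{-1/2} = \mathcal{J}\,M, \qquad M := \mathcal{D}^{-1/2}\mathcal{L}^{-1}\mathcal{E}_{\mathcal{A}}\,\mathcal{U}^{-1}\mathcal{D}^{-1/2},
$$
and orthogonality of $\mathcal{J}$ yields $\|\mathcal{E}\|_{\mathcal{D}} = \|M\|_2$. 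Finally I would observe that $M$ is symmetric: $\mathcal{E}_{\mathcal{A}}$ and $\mathcal{D}^{-1/2}$ are symmetric and $\mathcal{L}^{\intercal} = \mathcal{U}$ (equivalently $\mathcal{L}^{-1} = (\mathcal{U}^{-1})^{\intercal}$), so transposing $M$ returns $M$; therefore $\|M\|_2 = \rho(M)$. The same commutation of $\mathcal{J}$ with $\mathcal{D}^{\pm1/2}$ shows $\mathcal{F} = \mathcal{J}M\mathcal{J}$, a similarity transform of $M$, so $\rho(\mathcal{F}) = \rho(M)$. Chaining the equalities gives $\|\mathcal{E}\|_{\mathcal{D}} = \|M\|_2 = \rho(M) = \rho(\mathcal{F})$, which is in particular the asserted bound (in fact an equality). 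There is no deep obstruction here; the only thing requiring care is the bookkeeping of how $\mathcal{J}$ moves past $\mathcal{H}^{-1}$ and $\mathcal{D}^{\pm1/2}$, together with the observation that although $\mathcal{E}$ and $\mathcal{F}$ are not themselves symmetric, the sandwiched matrix $M$ is — and that is exactly what upgrades the generic bound $\rho(\cdot)\le\|\cdot\|_2$ to an identity.
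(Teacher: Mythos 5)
Your proposal is correct and follows essentially the same route as the paper: both reduce $\mathcal E$ to $\mathcal H^{-1}\mathcal L^{-1}\mathcal E_{\mathcal A}\,\mathcal U^{-1}$ via $\hat{\mathcal A}-\mathcal A=\mathcal E_{\mathcal A}$, insert $\mathcal J$ using $\mathcal H^{-1}=\mathcal D^{-1}\mathcal J$, and exploit the symmetry of the sandwiched operator (you in the Euclidean norm after $\mathcal D^{1/2}$-conjugation, the paper in the $(\cdot,\cdot)_{\mathcal D}$ inner product) together with the similarity to $\mathcal F$. Your version cleanly records that the bound is in fact an equality, which the paper's argument also yields implicitly.
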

\begin{proof}
 By direct calculation, we have that
\begin{equation}\label{eq:E}
\mathcal E  = \mathcal H^{-1}\, \mathcal L^{-1} ( \hat{\mathcal A} - \mathcal A) \, \mathcal U^{-1}  = \mathcal H^{-1} \mathcal L^{-1} 
\mathcal E_{\mathcal A} \, \mathcal U^{-1}  = \mathcal D^{-1} \mathcal J \, \mathcal L^{-1}
\, \mathcal E_{\mathcal A} \, 
\mathcal U^{-1}.
\end{equation}
As $\mathcal D^{-1} \mathcal J \, \mathcal L^{-1} \mathcal E_{\mathcal A} \, \mathcal U^{-1} \mathcal J$ is symmetric in the inner product $(\cdot,\cdot)_{\mathcal D}$, we have
\begin{align*}
\|\mathcal E \bs x\|_{\mathcal D} & = \|\mathcal D^{-1}\mathcal J\, \mathcal L^{-1} \mathcal E_{\mathcal A} \, \mathcal U^{-1}\mathcal J\, \mathcal J \bs x\|_{\mathcal D} \\
&\leq \rho(\mathcal D^{-1}\mathcal J\, \mathcal L^{-1}  \mathcal E_{\mathcal A} \, \mathcal U^{-1}\mathcal J) \|\mathcal J \bs x\|_{\mathcal D}.
\end{align*}
Since $\mathcal D^{-1}\mathcal J\, \mathcal L^{-1}  \mathcal E_{\mathcal A} \, \mathcal U^{-1}\mathcal J$ is similar to $\mathcal F = \mathcal D^{-1/2}\mathcal J\, \mathcal L^{-1} \mathcal E_{\mathcal A} \, \mathcal U^{-1} \mathcal J\, \mathcal D^{-1/2}$ and $\|\mcal J \bs x\|_{\mathcal D} = \|\bs x\|_{\mathcal D}$, we get the desired inequality.
\end{proof}

In the sequel, we focus on the estimate of $\rho (\mathcal F)$. 
Let 
\begin{align*}
E_A &= I - R_A^{1/2}\, A\, R_A^{1/2},\quad
\bar B = R_S^{1/2}\, B\, R_A^{1/2}\, E_A,\\
\bar S  & = B\, \bar R_A \, B^{\intercal}+C,\quad
E_{\bar S} = I - R_S^{1/2}\, \bar S \, R_S^{1/2}.
\end{align*}
By direct calculation, we obtain
\begin{equation}\label{eq:F}
\mathcal F = 
\begin{pmatrix}
E_A  & \bar B^{\intercal} \\ \bar B  & -E_{\bar S}
\end{pmatrix}.
\end{equation}


\begin{remark}\rm
When $R_A^{-1} = A$, the BWY method can be interpreted as an iterative method for solving the Schur complement equation $(BA^{-1}B^{\intercal} + C) p = BA^{-1}f-g$. The matrix $\mathcal F$ reduces to $\begin{pmatrix} 0 & 0 \\ 0 & -E_{\bar S} \end{pmatrix}$, therefore $\rho(\mathcal F) = \rho(E_{\bar S}) = \rho(E_S)$, which implies that the convergence of the BWY method is only dependent on $\rho(E_{S})$. So in the rest of this section, we always assume that $R_A^{-1} >  A$. $\Box$
\end{remark}

We present the following inequalities from the assumption $R_A^{-1} > A$.
\begin{lemma}
Assume $R_A^{-1} > A$ and let $\delta = \rho(E_A)$. Then we have
\begin{align}
\label{eq:rel_R_A}
(1-\delta)R_A^{-1} &\leq A < R_A^{-1}, \\
\label{eq:rel_R_bar} R_A  & < \bar R_A \leq (1 + \delta)R_A, \\
\label{eq:S_S_bar} S  & < \bar S \leq (1 + \delta)S, \\
\label{eq:E_A} \bar B \, E_A^{-1} \bar B^{\intercal} & \leq \delta \ R_S^{1/2}\, S \, R_S^{1/2} < \delta \ R_S^{1/2}\, \bar S \, R_S^{1/2}.
\end{align}
\end{lemma}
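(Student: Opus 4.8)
The four inequalities all stem from the single hypothesis $R_A^{-1}>A$, equivalently $0<R_A^{1/2}AR_A^{1/2}<I$, i.e. the symmetric matrix $E_A=I-R_A^{1/2}AR_A^{1/2}$ satisfies $0\le E_A<I$ (it is positive semidefinite since $A<R_A^{-1}$ gives $R_A^{1/2}AR_A^{1/2}<I$; the lower bound $0$ may fail strict positivity, so I only claim $0 \le E_A$, hence $\delta=\rho(E_A)\in[0,1)$). The strategy is to translate each claimed matrix inequality into a statement about $E_A$ via the congruence $X\mapsto R_A^{1/2}XR_A^{1/2}$ (or $R_A^{-1/2}XR_A^{-1/2}$), use the spectral bound $0\le E_A\le\delta I$, and then congruence back. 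I would do the four parts in the stated order since later ones build on earlier ones.

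First, \eqref{eq:rel_R_A}: conjugating by $R_A^{1/2}$, the claim $ (1-\delta)R_A^{-1}\le A<R_A^{-1}$ becomes $(1-\delta)I\le R_A^{1/2}AR_A^{1/2}<I$, i.e. $0<I-R_A^{1/2}AR_A^{1/2}=E_A\le\delta I$, which is exactly the definition of $\delta=\rho(E_A)$ together with $A<R_A^{-1}$. Second, \eqref{eq:rel_R_bar}: from $\bar R_A=2R_A-R_AAR_A$ one gets the identity (already recorded in the introduction) $I-\bar R_A A=(I-R_AA)^2$, hence $R_A^{1/2}\bar R_A^{-1}... $ — more directly, $R_A^{-1/2}\bar R_A R_A^{-1/2}=2I-R_A^{1/2}AR_A^{1/2}=I+E_A$, so $I<I+E_A\le(1+\delta)I$; conjugating back by $R_A^{1/2}$ gives $R_A<\bar R_A\le(1+\delta)R_A$. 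Third, \eqref{eq:S_S_bar}: since $\bar S=B\bar R_A B^{\intercal}+C$ and $S=BR_AB^{\intercal}+C$, subtract to get $\bar S-S=B(\bar R_A-R_A)B^{\intercal}$, and \eqref{eq:rel_R_bar} gives $0<\bar R_A-R_A\le\delta R_A$, so $0\le B(\bar R_A-R_A)B^{\intercal}\le\delta BR_AB^{\intercal}\le\delta S$ (using $C\ge0$); adding $S$ back yields $S<\bar S\le(1+\delta)S$. (Strictness of $S<\bar S$ needs a word: $\bar R_A-R_A=R_A^{1/2}E_AR_A^{1/2}$, and one must check $B R_A^{1/2}E_A R_A^{1/2} B^{\intercal}$ is actually positive definite; if $E_A$ were only semidefinite this could degenerate, but $R_A^{-1}>A$ strictly forces $E_A>0$, so this is fine — I would state it cleanly as $E_A>0$ from the start.)

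The last inequality \eqref{eq:E_A} is the one requiring actual work, and I expect it to be the main obstacle. Write $\bar B=R_S^{1/2}BR_A^{1/2}E_A$, so $\bar B E_A^{-1}\bar B^{\intercal}=R_S^{1/2}BR_A^{1/2}E_A R_A^{1/2}B^{\intercal}R_S^{1/2}$. The key observation is $R_A^{1/2}E_AR_A^{1/2}=\bar R_A-R_A$ (from the second step), hence $\bar B E_A^{-1}\bar B^{\intercal}=R_S^{1/2}B(\bar R_A-R_A)B^{\intercal}R_S^{1/2}=R_S^{1/2}(\bar S-S)R_S^{1/2}$. Then by the third step $\bar S - S\le \delta S$, so $\bar B E_A^{-1}\bar B^{\intercal}\le\delta R_S^{1/2}SR_S^{1/2}$, and the final strict inequality $R_S^{1/2}SR_S^{1/2}<R_S^{1/2}\bar S R_S^{1/2}$ is again \eqref{eq:S_S_bar} conjugated by $R_S^{1/2}$. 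The subtlety to be careful about is that $E_A^{-1}$ need not exist if $E_A$ is singular — but as noted $R_A^{-1}>A$ strictly gives $E_A>0$ invertible, so $\bar B E_A^{-1}\bar B^{\intercal}$ is well-defined; alternatively one can phrase \eqref{eq:E_A} as the assertion that $\bar B^{\intercal}x$ lies in the range of $E_A^{1/2}$ with the corresponding norm bound, avoiding the inverse, but the identity $R_A^{1/2}E_AR_A^{1/2}=\bar R_A-R_A$ makes the direct route clean. I would present the whole lemma as this sequence of congruences, emphasizing that $\delta I\ge E_A>0$ is the only ingredient.
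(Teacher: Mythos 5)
Your proposal is correct and follows essentially the same route as the paper: all four inequalities are reduced to $0<E_A\le\delta I$ by congruence, with \eqref{eq:rel_R_bar} from $R_A^{-1/2}\bar R_A R_A^{-1/2}=I+E_A$ and \eqref{eq:E_A} from $\bar B\,E_A^{-1}\bar B^{\intercal}=R_S^{1/2}B(R_A-R_AAR_A)B^{\intercal}R_S^{1/2}$ together with $R_A-R_AAR_A\le\delta R_A$ and $C\ge 0$. Your initial hesitation about whether $E_A$ is only semidefinite is unnecessary (the strict hypothesis $R_A^{-1}>A$ gives $E_A>0$ by congruence, as you later note), and your observation that this is needed for $E_A^{-1}$ to make sense is a point the paper leaves implicit.
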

\begin{proof}
Assumption $R_A^{-1} > A$ implies $\lambda_{\max}(R_A^{1/2}\, A\, R_A^{1/2}) < 1$. Therefore 
\begin{align*}
\rho (I - R_A^{1/2}\, A\, R_A^{1/2}) &= \max \left \{ \left |1 - \lambda_{\max}(R_A^{1/2}\, A\, R_A^{1/2}) \right |, \left |1 - \lambda_{\min}(R_A^{1/2}\, A\, R_A^{1/2})  \right |  \right \} \\
& = 1 - \lambda_{\min}(R_A^{1/2}\, A\, R_A^{1/2}).
\end{align*}
Then 
$$
R_A^{1/2}\, A\, R_A^{1/2} \geq \lambda_{\min}(R_A^{1/2}\, A\, R_A^{1/2}) I = (1-\delta) I
$$
implies the desired lower bound $A$. The upper bound of $A$ is the assumption.

To prove \eqref{eq:rel_R_bar}, we use the definition of $\bar R_A$ and bounds of $A$ in \eqref{eq:rel_R_A} to get
\begin{align*}
R_A < \bar R_A & = 2R_A - R_AA\, R_A \leq (1 + \delta) R_A.
\end{align*}
Inequality \eqref{eq:S_S_bar} is a direct consequence of \eqref{eq:rel_R_bar}.

The definition of $E_A$ and $\bar B$ imply that
\begin{align*}
\bar B \, E_A^{-1} \bar B^{\intercal} & = R_S^{1/2}\, B\, R_A^{1/2}\, E_AE_A^{-1}E_A\, R_A^{1/2}\, B^{\intercal} \, R_S^{1/2}  \\
&= R_S^{1/2}\, B\,(R_A - R_A\, A \,R_A)\,B^{\intercal} \, R_S^{1/2}.
\end{align*}
Inequality \eqref{eq:rel_R_A} implies that
$$
R_A - R_AA\, R_A \leq \delta R_A.
$$
Therefore, it holds
\begin{align*}
\bar B \, E_A^{-1} \bar B^{\intercal} & \leq \delta R_S^{1/2} B\, R_A \, B^{\intercal} \, R_S^{1/2} 
 \leq \delta R_S^{1/2} (C + B R_A B^{\intercal}) R_S^{1/2},
\end{align*}
which implies inequalities in \eqref{eq:E_A}.
%
\end{proof}

Similarly we have the following inequality from the assumption $R_S^{-1}\geq \bar S$.
\begin{lemma}
 Assume $R_S^{-1}\geq \bar S$ and let $\bar \gamma = \rho (E_{\bar S})$. Then we have
 \begin{equation}\label{eq:RSR}
R_S^{1/2}\, \bar S \, R_S^{1/2} \geq (1 - \bar \gamma ) I.
\end{equation}
\end{lemma}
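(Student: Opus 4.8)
The plan is to mimic almost verbatim the proof of the preceding lemma (the one deriving \eqref{eq:rel_R_A}), since the inequality \eqref{eq:RSR} is the exact Schur-complement analogue of the lower bound $(1-\delta)R_A^{-1}\le A$. First I would rewrite the hypothesis $R_S^{-1}\ge\bar S$ in the congruence-transformed form: conjugating by $R_S^{1/2}$ gives $R_S^{1/2}\,\bar S\,R_S^{1/2}\le I$, hence $\lambda_{\max}(R_S^{1/2}\,\bar S\,R_S^{1/2})\le 1$.

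Next I would record that $\bar S=B\,\bar R_A\,B^{\intercal}+C$ is symmetric positive definite: $\bar R_A$ is SPD (it equals $2R_A-R_AA\,R_A$ and, by \eqref{eq:rel_R_bar}, satisfies $\bar R_A>R_A>0$), $B$ has full rank so $B\,\bar R_A\,B^{\intercal}>0$, and $C\ge 0$. Consequently $R_S^{1/2}\,\bar S\,R_S^{1/2}$ is SPD, so all its eigenvalues lie in $(0,1]$, in particular $\lambda_{\min}(R_S^{1/2}\,\bar S\,R_S^{1/2})>0$.

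Then I would compute the spectral radius of $E_{\bar S}=I-R_S^{1/2}\,\bar S\,R_S^{1/2}$ exactly as in the earlier proof:
$$
\bar\gamma=\rho(E_{\bar S})=\max\!\left\{\,\bigl|1-\lambda_{\max}(R_S^{1/2}\,\bar S\,R_S^{1/2})\bigr|,\ \bigl|1-\lambda_{\min}(R_S^{1/2}\,\bar S\,R_S^{1/2})\bigr|\,\right\}.
$$
Because $0<\lambda_{\min}\le\lambda_{\max}\le 1$, both quantities inside the maximum are nonnegative and $1-\lambda_{\min}\ge 1-\lambda_{\max}$, so $\bar\gamma=1-\lambda_{\min}(R_S^{1/2}\,\bar S\,R_S^{1/2})$. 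Rearranging, $\lambda_{\min}(R_S^{1/2}\,\bar S\,R_S^{1/2})=1-\bar\gamma$, which yields $R_S^{1/2}\,\bar S\,R_S^{1/2}\ge(1-\bar\gamma)I$, i.e.\ \eqref{eq:RSR}.

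There is essentially no hard step here; the only point that needs a word of care is verifying $\bar S>0$ so that $\lambda_{\min}>0$ (otherwise one would only get $\bar\gamma\le 1$ rather than a clean identity), and noting that the hypothesis is the non-strict $R_S^{-1}\ge\bar S$, which still permits $\lambda_{\max}=1$ but causes no difficulty in the argument above.
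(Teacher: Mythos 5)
Your proof is correct and is exactly the argument the paper intends: the lemma is stated with the remark ``similarly,'' deferring to the proof of the preceding lemma for $R_A$ and $A$, which you reproduce faithfully (conjugate the hypothesis to get $\lambda_{\max}(R_S^{1/2}\bar S R_S^{1/2})\le 1$, deduce $\bar\gamma = 1-\lambda_{\min}$, and rearrange). The only superfluous step is the verification that $\bar S>0$: the inequality $\lambda_i \ge 1-\bar\gamma$ already follows for every eigenvalue from $|1-\lambda_i|\le\bar\gamma$, so the conclusion holds regardless, with the hypothesis $R_S^{-1}\ge\bar S$ serving mainly to guarantee $\bar\gamma<1$ elsewhere in the paper.
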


We now rescale $\mathcal F$ to further simplify its formulation.
\begin{lemma}\label{lem:rho_F}
Assume $R_A^{-1} > A$ and let $\delta = \rho(E_A)$. Denote 
$$
\mathcal M = \begin{pmatrix}
\delta^{-1/2}E_A^{1/2}  & 0 \\ 0 & I
\end{pmatrix}\text{ and } \mathcal T = \begin{pmatrix}
\delta \, I & \delta^{1/2} E_A^{-1/2}\bar B^{\intercal} \\
\delta^{1/2} \bar B \, E_A^{-1/2}  & -E_{\bar S}
\end{pmatrix},
$$ 
then $\mathcal F = \mathcal M \mathcal T \mathcal M$ and thus
$$
\rho(\mathcal F) \leq \rho(\mathcal T).
$$
\end{lemma}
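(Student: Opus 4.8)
The plan is to obtain the identity $\mathcal F=\mathcal M\mathcal T\mathcal M$ by a direct block multiplication, and then to read off the spectral-radius bound from the fact that $\mathcal M$ is a Euclidean contraction. First I would record that the hypothesis $R_A^{-1}>A$ forces $E_A=I-R_A^{1/2}AR_A^{1/2}$ to be symmetric positive definite with $0<\lambda_{\min}(E_A)\le\lambda_{\max}(E_A)<1$; in particular $\delta=\rho(E_A)=\lambda_{\max}(E_A)>0$, so that $E_A^{1/2}$ and $E_A^{-1/2}$ are well-defined SPD matrices and $\mathcal M$ is SPD.

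Next comes the algebraic verification. Multiplying out $\mathcal M\mathcal T\mathcal M$ block by block, the $(1,1)$ block is $\delta^{-1/2}E_A^{1/2}(\delta I)\delta^{-1/2}E_A^{1/2}=E_A$, the $(1,2)$ block is $\delta^{-1/2}E_A^{1/2}\cdot\delta^{1/2}E_A^{-1/2}\bar B^{\intercal}=\bar B^{\intercal}$, the $(2,1)$ block is its transpose $\bar B$, and the $(2,2)$ block is $-E_{\bar S}$. Comparing with \eqref{eq:F} gives $\mathcal M\mathcal T\mathcal M=\mathcal F$.

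Finally, for the spectral estimate I would observe that $\|\mathcal M\|_2^2=\max\{\delta^{-1}\lambda_{\max}(E_A),\,1\}=1$, since $\lambda_{\max}(E_A)=\delta$; hence $\|\mathcal M\bs x\|\le\|\bs x\|$ for all $\bs x$. Both $\mathcal F$ and $\mathcal T$ are symmetric, so their spectral radii equal their Euclidean operator norms, and for any unit vector $\bs x$,
$$|(\mathcal F\bs x,\bs x)|=|(\mathcal T(\mathcal M\bs x),\mathcal M\bs x)|\le\rho(\mathcal T)\,\|\mathcal M\bs x\|^2\le\rho(\mathcal T);$$
taking the supremum over $\bs x$ yields $\rho(\mathcal F)\le\rho(\mathcal T)$. (Equivalently one may invoke submultiplicativity of the spectral norm, $\rho(\mathcal F)=\|\mathcal M\mathcal T\mathcal M\|_2\le\|\mathcal M\|_2^2\,\|\mathcal T\|_2=\rho(\mathcal T)$.) There is essentially no obstacle: the only point requiring a moment's care is that $\mathcal M$ is genuinely a contraction, which is exactly the inequality $\lambda_{\max}(E_A)\le\delta$ built into the definition of $\delta$; everything else is bookkeeping.
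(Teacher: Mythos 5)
Your proof is correct and follows essentially the same route as the paper's: verify $\mathcal F=\mathcal M\mathcal T\mathcal M$ by block multiplication and then use symmetry together with $\|\mathcal M\|\le 1$ (since $\lambda_{\max}(E_A)=\delta$) to get $\rho(\mathcal F)=\|\mathcal F\|\leq\|\mathcal M\|^2\|\mathcal T\|\leq\rho(\mathcal T)$. Your preliminary observation that $R_A^{-1}>A$ makes $E_A$ SPD, so that $E_A^{\pm 1/2}$ and hence $\mathcal M$ and $\mathcal T$ are well defined, is a worthwhile detail the paper leaves implicit.
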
 
\begin{proof}
It is straightforward to verify that $\mathcal F = \mathcal M \mathcal T \mathcal M$. Since $\mathcal F$, $\mathcal M$ and $\mathcal T$ are symmetric, we have
$$
\rho(\mathcal F) = \|\mathcal F\| \leq \|\mathcal M\|^2 \|\mathcal T\| \leq \rho(\mathcal T).
$$
\end{proof}

In the rest of this section, we will focus on the estimate of $\rho(\mathcal T)$. 
\begin{lemma}\label{lem:M}
Assume that $R_A^{-1} > A, R_S^{-1} \geq \bar S$
and 
$\delta < \frac{\sqrt{5} - 1}{2}$,
then we have
$$
\rho(\mathcal T) \leq \rho_1<1,
$$
where 
$$\rho_1 = \max \left\{\frac{-(\delta-\bar \gamma) + \sqrt{(\delta-\bar \gamma)^2 + 4(\delta^2(1-\bar \gamma) + \delta\bar \gamma )}}{2}, \frac{\sqrt{5}+1}{2}\delta\right\}.$$
\end{lemma}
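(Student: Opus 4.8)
The plan is to bound the spectral radius of the symmetric block $2\times 2$ matrix
$$
\mathcal T = \begin{pmatrix}
\delta\, I & \delta^{1/2}E_A^{-1/2}\bar B^{\intercal}\\
\delta^{1/2}\bar B\, E_A^{-1/2} & -E_{\bar S}
\end{pmatrix}
$$
by a direct eigenvalue/Rayleigh-quotient argument. Since $\mathcal T$ is symmetric, $\rho(\mathcal T) = \max_{\|(x,y)\|=1} |( \mathcal T (x,y)^{\intercal},(x,y)^{\intercal})|$. Writing out the quadratic form gives
$$
Q(x,y) = \delta\|x\|^2 - (E_{\bar S}y,y) + 2\delta^{1/2}(\bar B\,E_A^{-1/2}x, y).
$$
I would split into the two cases corresponding to the two entries of the $\max$ defining $\rho_1$: first show $Q(x,y)\le \rho_1$ (the positive direction), then show $Q(x,y)\ge -\rho_1$ (the negative direction). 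For the upper bound, the cross term is controlled by Cauchy--Schwarz together with the key inequality \eqref{eq:E_A}, $\bar B\, E_A^{-1}\bar B^{\intercal} \le \delta\, R_S^{1/2}\bar S\, R_S^{1/2} = \delta(I - E_{\bar S})$; this lets one replace $2\delta^{1/2}(\bar B E_A^{-1/2}x,y)$ by something like $2\delta\, \|x\|\, ((I-E_{\bar S})y,y)^{1/2}$, after which the problem becomes a scalar optimization in $t = \|x\|^2$ and the spectrum of $E_{\bar S}$ (with eigenvalues in $(-\bar\gamma,\bar\gamma]$, actually in $[0,\bar\gamma]$ on the relevant part by $R_S^{-1}\ge\bar S$, but one must keep both signs for the lower bound). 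Reducing to the worst eigenvalue $s\in\{-\bar\gamma,\bar\gamma\}$ (or to $s=\bar\gamma$ by monotonicity), the maximization over the unit sphere collapses to maximizing $\delta t - s(1-t) + 2\delta\sqrt{t(1-t)}$ or a similar expression, whose critical value is the root of a quadratic — precisely the first term in $\rho_1$, namely $\tfrac12\bigl(-(\delta-\bar\gamma)+\sqrt{(\delta-\bar\gamma)^2 + 4(\delta^2(1-\bar\gamma)+\delta\bar\gamma)}\bigr)$.

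For the lower bound on $Q$ (i.e.\ the most negative eigenvalue), the same Cauchy--Schwarz estimate gives $Q(x,y) \ge \delta\|x\|^2 - (E_{\bar S}y,y) - 2\delta\|x\|((I-E_{\bar S})y,y)^{1/2}$, and one needs to check this stays $\ge -\rho_1$. Here the term $-(E_{\bar S}y,y)$ can be as large as $\bar\gamma\|y\|^2$ but not more, and the negative cross term is at most $2\delta$ in size; a short scalar estimate should show the minimum of $Q$ is $\ge -\max\{\bar\gamma + \text{something}, \ldots\}$ which is dominated by $\rho_1$ (this is where the second entry $\tfrac{\sqrt5+1}{2}\delta$ of the max, and the hypothesis $\delta < \tfrac{\sqrt5-1}{2}$ — equivalently $\tfrac{\sqrt5+1}{2}\delta^2 < \delta$, i.e.\ $\tfrac{\sqrt5+1}{2}\delta<1$ after rescaling — will be used to guarantee $\rho_1<1$). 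Finally, combining with Lemma~\ref{lem:rho_F}, $\rho(\mathcal F)\le\rho(\mathcal T)\le\rho_1$, and one verifies $\rho_1<1$: the first term is $<1$ exactly when $\delta-\bar\gamma + \text{(stuff)} < 2 - (\delta-\bar\gamma)$ reduces, after using $\bar\gamma<1$, to the stated bound on $\delta$; the second term $\tfrac{\sqrt5+1}{2}\delta<1$ is equivalent to $\delta<\tfrac{2}{\sqrt5+1}=\tfrac{\sqrt5-1}{2}$.

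I expect the main obstacle to be the case analysis and the bookkeeping in the scalar optimization: one must be careful that the Cauchy--Schwarz step is tight enough not to lose the constant (choosing to bound $(\bar B E_A^{-1/2}x,y)^2 \le (\bar B E_A^{-1}\bar B^{\intercal}y,y)\|x\|^2$ rather than the cruder $\|\bar B E_A^{-1/2}\|^2\|x\|^2\|y\|^2$ is essential so that $E_{\bar S}$ enters through $I-E_{\bar S}$ and the cross term and the diagonal $-E_{\bar S}$ term interact correctly). A secondary subtlety is handling the sign of the eigenvalues of $E_{\bar S}$: although $R_S^{-1}\ge\bar S$ forces $E_{\bar S}\ge 0$, so its eigenvalues lie in $[0,\bar\gamma]$, one should still confirm that the extremal Rayleigh quotient is attained at $s=\bar\gamma$ (monotonicity of the scalar objective in $s$) rather than at an interior eigenvalue, which justifies reducing to a one-parameter family. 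Once these are pinned down, evaluating the critical point of the resulting concave scalar function is routine and yields exactly the quadratic-root formula in the statement.
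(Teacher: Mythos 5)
Your Rayleigh-quotient route is sound and genuinely different from the paper's argument. The paper works with the eigenvalue equations of $\mathcal T$ directly: it sets aside the interval $\lambda\in[\delta-\delta^2,\delta]$ (where $|\lambda|\le\delta$ trivially), eliminates $v$ via $v=(\lambda-\delta)^{-1}\delta^{1/2}E_A^{-1/2}\bar B^{\intercal}q$, and locates the remaining eigenvalues as zeros of the strictly decreasing scalar function $h(\mu)=(\Phi(\mu)q,q)$, using \eqref{eq:E_A} and \eqref{eq:RSR} to force $h>0$ below $\mu_1$ and $h<0$ above $\tfrac{1+\sqrt5}{2}\delta$. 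You instead bound $\max_{\|(x,y)\|=1}|Q(x,y)|$ by Cauchy--Schwarz plus \eqref{eq:E_A}, reducing to the worst-case $2\times2$ model $\bigl(\begin{smallmatrix}\delta & c\\ c& -s\end{smallmatrix}\bigr)$ with $c^2=\delta^2(1-s)$, $s\in[0,\bar\gamma]$; this makes the extremal structure transparent and avoids the paper's separate treatment of $\lambda$ near $\delta$, at the cost of a two-parameter optimization. I checked that your approach does reproduce exactly the stated $\rho_1$: the key fact that $(E_{\bar S}y,y)=s\|y\|^2$ and $((I-E_{\bar S})y,y)=(1-s)\|y\|^2$ with the \emph{same} $s\in[0,\bar\gamma]$ makes the reduction exact, and the endpoint monotonicity you flag does hold (the negative branch $\lambda_-(s)=\tfrac12\bigl(\delta-s-\sqrt{(\delta-s)^2+4(\delta^2(1-s)+\delta s)}\bigr)$ is decreasing in $s$ because $(D')^2-4D=-4\delta^2(1+\delta-\delta^2)<0$, so the minimum is at $s=\bar\gamma$).

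Two corrections to your bookkeeping. First, you have the two entries of $\rho_1$ attached to the wrong branches: the \emph{positive} direction ($\max Q$) is worst at $s=0$ and yields $\tfrac{\sqrt5+1}{2}\delta$ (from $\max_t\,\delta t+2\delta\sqrt{t(1-t)}$, i.e.\ $5t^2-5t+1=0$), while the \emph{negative} direction ($-\min Q$) is worst at $s=\bar\gamma$ and yields the quadratic-root expression, which is the first entry. Since $\rho_1$ is the maximum of both, the final bound is unaffected, but the derivation as written would not produce the claimed formulas. Second, your parenthetical about eigenvalues of $E_{\bar S}$ lying in $(-\bar\gamma,\bar\gamma]$ and "keeping both signs" is unnecessary: $R_S^{-1}\ge\bar S$ gives $0\le E_{\bar S}\le\bar\gamma I$ outright, and this one-sidedness is precisely what lets the two occurrences of $E_{\bar S}$ be controlled by a single scalar $s$. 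Neither point is a fatal gap, but both need to be fixed for the computation to close.
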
 
\begin{proof}
Let $\lambda$ be any eigenvalue of $\mathcal T$ and $(v,q)^{\intercal}$ 
be the corresponding eigenvector. As $\mathcal T$ is symmetric, $\lambda \in \mathbb R$, by definition, it holds 
\begin{align}
\label{eq:eigensystem1} \delta \,v + \delta^{1/2}\, E_A^{-1/2}\bar B^{\intercal} \,q &= \lambda\, v, \\
\label{eq:eigensystem2} \delta^{1/2}\bar B \, E_A^{-1/2} v - E_{\bar S} \, q &= \lambda \,q.
\end{align}

If $\lambda  \in [\delta - \delta^2, \delta]$, we already have $|\lambda| \leq \delta < \rho_1$. Therefore,
we only consider cases that $\lambda \in (-\infty,\delta-\delta^2)$ and $\lambda\in (\delta,\infty)$. In these cases, we can solve $v$ in \eqref{eq:eigensystem1}
$$
v = (\lambda - \delta)^{-1}\delta^{1/2}\, E_A^{-1/2}\bar B^{\intercal} q,
$$
and substitute into \eqref{eq:eigensystem2} to get the equation
\begin{equation}\label{eq:eigen}
\frac{\delta}{\lambda - \delta} \bar B \, E_A^{-1} \bar B^{\intercal} q - E_{\bar S} q = \lambda \, q.
\end{equation}

Let 
$$
\Phi(\mu) = \frac{\delta}{\mu - \delta} \bar B \, E_A^{-1} \bar B^{\intercal} - E_{\bar S} - \mu I = \frac{\delta}{\mu - \delta} \bar B \, E_A^{-1} \bar B^{\intercal} + R_S^{1/2}\, \bar S \, R_S^{1/2} - (1 + \mu )I,
$$
and define $$h(\mu) = (\Phi(\mu) q,q).$$

Equation \eqref{eq:eigen} implies that $\lambda$ is a zero point of  the function $h(\mu)$. Since $$h'(\mu) = -(\delta/(\mu - \delta)^2 \bar B \, E_A^{-1} \bar B^{\intercal} q, q ) - (q,q) < 0$$
 holds for all $\mu \neq \delta$, the function $h(\mu)$ is strictly decreasing in intervals $(-\infty,\delta - \delta^2)$ and $(\delta,\infty)$. 

Consider the case $\mu < \delta-\delta^2 < \delta$. The coefficient $\delta/(\mu - \delta) < 0$ and $\left( \frac{\delta^2}{\mu - \delta}+1\right) > 0$. Using \eqref{eq:E_A} and \eqref{eq:RSR}, we have the inequality
\begin{equation}\label{eq:hg0}
\begin{aligned}
\Phi(\mu) & > \left( \frac{\delta^2}{\mu - \delta} + 1 \right) R_S^{1/2}\, \bar S \, R_S^{1/2} - (1 + \mu) I \\
& \geq \left [\left( \frac{\delta^2}{\mu - \delta}+1\right)(1-\bar \gamma) - (1+\mu) \right ]I.
\end{aligned}
\end{equation}
Therefore, if 
\begin{equation}
\label{eq:mu1}
\mu < \frac{(\delta-\bar \gamma) - \sqrt{(\delta-\bar \gamma)^2 + 4(\delta^2(1-\bar \gamma) + \delta\bar \gamma )}}{2} := \mu_1(\delta, \bar \gamma)< 0,
\end{equation}
it holds
$$
(\delta^2 + \mu - \delta)(1 - \bar \gamma) < (1+\mu)(\mu - \delta),
$$
which implies $h(\mu) >0$.
%
%
As $h(\mu)$ is strictly decreasing, all roots of $h(\mu) = 0$ in $(-\infty,\delta-\delta^2)$ are greater than or equal to $\mu_1$, which implies that $\lambda \in [\mu_1,\delta - \delta^2)$. Therefore $$|\lambda| \leq \max \left \{|\mu_1|, \delta - \delta^2 \right \} \leq \rho_1.$$
The bound $|\mu_1|< 1$ is from the fact $\bar \gamma < 1$ and $\delta < 1$.

In the case $\mu > \delta$, the coefficient $\delta/(\mu - \delta) > 0$. We use  inequality \eqref{eq:E_A} and the assumption $R_S^{-1} \geq \bar S$ to get 
\begin{equation}\label{eq:hs0}
\Phi(\mu) < \left( \frac{\delta^2}{\mu - \delta} + 1 \right) R_S^{1/2}\, \bar S \, R_S^{1/2} - (1 + \mu) I \leq \left( \frac{\delta^2}{\mu - \delta} - \mu \right ) I.
\end{equation}
Therefore, if $\mu > \frac{1 + \sqrt{5}}{2} \delta$, then 
$$
\delta^2 + \delta \mu - \mu^2 < 0,
$$
and consequently $h(\mu) < 0$.
We conclude that the roots of $h(\mu) = 0$ in the interval $(\delta,\infty)$ should be less than or equal to $\frac{1 +\sqrt{5}}{2}\delta$, which means $\delta <\lambda \leq \frac{\sqrt{5}+1}{2}\delta \leq \rho_1$.

Combine these cases together, we obtain the desired bound for $\rho(\mathcal T)$. 
\end{proof}

\begin{remark}\rm
When $R_A^{-1} = A$, i.e., $\delta = 0$, we have $\rho_1 = \bar\gamma = \gamma$, which is consistent with the convergence results of Uzawa methods.  $\Box$
\end{remark}

\begin{remark}\label{rm:otherversion}\rm
We can obtain similar estimate by replacing the assumption $R_S^{-1} \geq \bar S$
and $\delta < \frac{\sqrt{5} - 1}{2}$ by $R_S^{-1} \geq S$
and $\delta < \frac{1}{2}$. Indeed for the case $\mu < \delta-\delta^2$, the calculation is identical by using $S$ and $\gamma$. For the case $\mu > \delta$, using the relation $\bar S\leq (1+\delta)S$, inequality \eqref{eq:hs0} becomes
$$
\Phi(\mu) \leq \left( \frac{\delta^2}{\mu - \delta} + 1 + \delta \right) R_S^{1/2}\, S \, R_S^{1/2} - (1 + \mu) I \leq \left( \frac{\delta^2}{\mu - \delta} - (\mu -\delta)\right ) I.
$$ 
Therefore $\lambda \leq 2\delta < 1$ if $\delta < 1/2$. The contraction factor becomes
$$
\tilde\rho_1 = \max \left \{ |\mu_1(\delta,\gamma)|, 2\delta \right \}.
$$
The assumption $R_S^{-1} \geq S$ is weaker than $R_S^{-1} \geq \bar S$ as $\bar S \geq S$. The upper bound for $\delta$ is, however, more tight and the contraction rate $\tilde \rho_1$ is slightly larger than $\rho_1$. $\Box$
\end{remark}

To transfer back to the original variable, we need to estimate the norm of the transformation.
\begin{lemma}\label{lem:U_norm}
Assume that $R_A^{-1} > A$ and $R_S^{-1} \geq S$. Then for any $\bs x=(v,q)^\intercal \in \mathbb R^{n+m}$ with $v \in \mathbb R^n$ and $q\in \mathbb R^m$, there hold
\begin{align}
\label{eq:u_up}
\max\{\, \|\mathcal U\bs x\|_{\mathcal D}^2,\, \|\mathcal U^{-1}\bs x\|^2_{\mathcal D} \} \leq 3\|\bs x\|_{\mathcal D}^2.
\end{align}
\end{lemma}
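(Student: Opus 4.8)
The plan is to expand both norms directly in the block coordinates of $\mathcal U$ and $\mathcal D$, then control the resulting cross term by a weighted Cauchy--Schwarz inequality together with the hypothesis $R_S^{-1}\geq S$. Recall $\mathcal U = \begin{pmatrix} I & R_AB^{\intercal} \\ 0 & I\end{pmatrix}$, $\mathcal U^{-1} = \begin{pmatrix} I & -R_AB^{\intercal} \\ 0 & I\end{pmatrix}$, and $\mathcal D = \mathrm{diag}(R_A^{-1},R_S^{-1})$, so that $\|\bs x\|_{\mathcal D}^2 = \|v\|_{R_A^{-1}}^2 + \|q\|_{R_S^{-1}}^2$.

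First I would write $\mathcal U\bs x = (v + R_AB^{\intercal}q,\ q)^{\intercal}$ and compute, using $(v,R_AB^{\intercal}q)_{R_A^{-1}} = (v,B^{\intercal}q) = (Bv,q)$ and $\|R_AB^{\intercal}q\|_{R_A^{-1}}^2 = (BR_AB^{\intercal}q,q)$,
\[
\|\mathcal U\bs x\|_{\mathcal D}^2 = \|v\|_{R_A^{-1}}^2 + 2(Bv,q) + (BR_AB^{\intercal}q,q) + \|q\|_{R_S^{-1}}^2 .
\]
The identical computation with $-R_AB^{\intercal}q$ in place of $R_AB^{\intercal}q$ gives the same expression for $\|\mathcal U^{-1}\bs x\|_{\mathcal D}^2$ but with $-2(Bv,q)$, so it suffices to bound $\pm 2(Bv,q) + (BR_AB^{\intercal}q,q)$ by $2\|v\|_{R_A^{-1}}^2 + 2\|q\|_{R_S^{-1}}^2$.

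For the cross term I would use $\pm 2(Bv,q) = \pm 2\big(R_A^{-1/2}v,\ R_A^{1/2}B^{\intercal}q\big) \leq \|v\|_{R_A^{-1}}^2 + (BR_AB^{\intercal}q,q)$, giving
\[
\pm 2(Bv,q) + (BR_AB^{\intercal}q,q) \leq \|v\|_{R_A^{-1}}^2 + 2(BR_AB^{\intercal}q,q).
\]
The key inequality is then $BR_AB^{\intercal}\leq S = BR_AB^{\intercal}+C\leq R_S^{-1}$ — the first since $C$ is positive semidefinite, the second by assumption — which yields $(BR_AB^{\intercal}q,q)\leq \|q\|_{R_S^{-1}}^2$. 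Substituting back,
\[
\max\{\|\mathcal U\bs x\|_{\mathcal D}^2,\ \|\mathcal U^{-1}\bs x\|_{\mathcal D}^2\} \leq 2\|v\|_{R_A^{-1}}^2 + 3\|q\|_{R_S^{-1}}^2 \leq 3\|\bs x\|_{\mathcal D}^2 .
\]

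There is no genuine obstacle here; the only care needed is bookkeeping the $(\cdot,\cdot)_{R_A^{-1}}$ identities correctly. It is worth noting that this particular estimate uses only $R_S^{-1}\geq S$ (and $C\geq 0$), not $R_A^{-1}>A$. One could also optimize the Cauchy--Schwarz split, replacing $\|v\|_{R_A^{-1}}^2 + (BR_AB^{\intercal}q,q)$ by $t\|v\|_{R_A^{-1}}^2 + t^{-1}(BR_AB^{\intercal}q,q)$ and taking $t = (1+\sqrt 5)/2$, to sharpen the constant from $3$ to $(1+\sqrt 5)/2$, but the clean value $3$ is all that is needed in the subsequent transfer back to the $(u,p)$ variables.
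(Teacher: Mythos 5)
Your proof is correct and essentially the same as the paper's: both amount to $\|v+R_AB^{\intercal}q\|_{R_A^{-1}}^2\leq 2\|v\|_{R_A^{-1}}^2+2(BR_AB^{\intercal}q,q)$ followed by $BR_AB^{\intercal}\leq S\leq R_S^{-1}$, yielding $2\|v\|_{R_A^{-1}}^2+3\|q\|_{R_S^{-1}}^2\leq 3\|\bs x\|_{\mathcal D}^2$. One small slip in your optional closing aside: optimizing the weight gives $\max\{1+t,\,2+t^{-1}\}$ minimized at $t=(1+\sqrt5)/2$ with value $(3+\sqrt5)/2\approx 2.618$, not $(1+\sqrt5)/2$.
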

\begin{proof}
By simple calculation, it holds
\begin{align*}
\|\mathcal U\bs x\|^2_{\mathcal D} & = \|v + R_AB^\intercal q\|^2_{R_A^{-1}} + \|q\|^2_{R_S^{-1}}  \\
& \leq 2\|v\|^2_{R_A^{-1}} + 2\|R_AB^\intercal q\|_{R_A^{-1}}^2 + \|q\|^2_{R_S^{-1}}  \\
&\leq 3\|\bs x\|^2_{\mathcal D}.
\end{align*}
Here in the second step, we have used Cauchy-Schwarz inequality and in the third step, we use the assumption $S \leq R_S^{-1}$. Proofs of the bound for $\|\, \mathcal U^{-1}\bs x\|_{\mathcal D}$ is almost identical. 
\end{proof}

We summarize the above estimates as the following theorem. 
\begin{theorem}\label{the:conv}
 Assume that $R_A^{-1} > A, R_S^{-1} \geq \bar S$ and
 $
 \delta < \frac{\sqrt{5} - 1}{2},
 $
 then the BWY iteration is convergent
 \begin{align}\label{eq:error}
 \|u - u^k\|_{R_A^{-1}}^2 + \|p-p^k\|_{R_S^{-1}}^2 \leq 9 \rho_1^{2k}\left( \|u-u^0\|_{R_A^{-1}}^{2} + \|p-p^0\|_{R_S^{-1}}^2 \right),
 \end{align} 
where $\rho_1 = \rho_1(\delta, \bar \gamma) \in (0,1)$ is given in Lemma \ref{lem:M}.
\end{theorem}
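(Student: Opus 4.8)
The plan is to assemble the pieces already in hand: Lemmas \ref{lem:F}, \ref{lem:rho_F} and \ref{lem:M} control the error operator $\mathcal E$ of the transformed iteration \eqref{eq:iter_meth} in the $\mathcal D$-norm, and Lemma \ref{lem:U_norm} lets us transfer the resulting estimate back to the original variables $(u,p)$. Before doing so I would make explicit that iteration \eqref{eq:iter_meth}, followed by the back-substitution \eqref{eq:vuk}, really does reproduce the BWY steps \eqref{BWY_1}--\eqref{BWY_3}: this is a direct expansion of the factorization $\hat{\mathcal A}=\mathcal L\,\mathcal H\,\mathcal U$ together with the residual update, using that $\mathcal H$ (hence $\mathcal D$) is invertible because $R_A,R_S$ are SPD. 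Consequently $\mathcal E=\mathcal I-\mathcal H^{-1}\mathcal L^{-1}\mathcal A\,\mathcal U^{-1}$ is the error operator for the $(v,q)$-variable, related to the error operator for $(u,p)$ by \eqref{eq:err}.

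With that set up, the hypotheses $R_A^{-1}>A$, $R_S^{-1}\geq\bar S$ and $\delta<(\sqrt5-1)/2$ are exactly those needed to chain Lemmas \ref{lem:F}, \ref{lem:rho_F} and \ref{lem:M}, giving
\begin{equation*}
\|\mathcal E\|_{\mathcal D}\le\rho(\mathcal F)\le\rho(\mathcal T)\le\rho_1<1.
\end{equation*}
Next I would let $(v^\ast,q^\ast)^\intercal=\mathcal U(u,p)^\intercal$ be the transformed exact solution; it is a fixed point of \eqref{eq:iter_meth}, so the error obeys $(v^\ast,q^\ast)^\intercal-(v^k,q^k)^\intercal=\mathcal E^k\big((v^\ast,q^\ast)^\intercal-(v^0,q^0)^\intercal\big)$ and hence $\|(v^\ast-v^k,q^\ast-q^k)^\intercal\|_{\mathcal D}\le\rho_1^k\,\|(v^\ast-v^0,q^\ast-q^0)^\intercal\|_{\mathcal D}$. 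Finally, using $\|(w,r)^\intercal\|_{\mathcal D}^2=\|w\|_{R_A^{-1}}^2+\|r\|_{R_S^{-1}}^2$, the identity $(u-u^k,p-p^k)^\intercal=\mathcal U^{-1}\big((v^\ast,q^\ast)^\intercal-(v^k,q^k)^\intercal\big)$ from \eqref{eq:vu}--\eqref{eq:vuk}, and Lemma \ref{lem:U_norm} applied to both $\mathcal U^{-1}$ and $\mathcal U$ (its hypothesis $R_S^{-1}\ge S$ follows from $R_S^{-1}\ge\bar S\ge S$ via \eqref{eq:S_S_bar}), I obtain
\begin{equation*}
\|u-u^k\|_{R_A^{-1}}^2+\|p-p^k\|_{R_S^{-1}}^2\le 3\,\rho_1^{2k}\,\big\|\mathcal U(u-u^0,p-p^0)^\intercal\big\|_{\mathcal D}^2\le 9\,\rho_1^{2k}\Big(\|u-u^0\|_{R_A^{-1}}^2+\|p-p^0\|_{R_S^{-1}}^2\Big),
\end{equation*}
which is \eqref{eq:error}.

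There is no serious obstacle remaining: all the analytic content — the eigenvalue bound for $\mathcal T$ — was carried out in Lemma \ref{lem:M}, and what is left is bookkeeping. The two points deserving a line of care are (i) checking that \eqref{eq:iter_meth} is genuinely the BWY iteration, so that $\rho(\mathcal F)$ really bounds the relevant error operator, and (ii) the constant $9=3^2$, which is the price of measuring the error in the convenient basis $(v,q)$ rather than directly in $\mathcal D$: Lemma \ref{lem:U_norm} is invoked once to pass from $(u,p)$-errors to $(v,q)$-errors at the initial step and once to pass back at step $k$.
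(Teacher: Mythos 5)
Your proposal is correct and follows the same route as the paper's proof: chain Lemmas \ref{lem:F}, \ref{lem:rho_F} and \ref{lem:M} to get $\|\mathcal E\|_{\mathcal D}\le\rho_1$ for the transformed variables, then apply Lemma \ref{lem:U_norm} twice (once for $\mathcal U^{-1}$, once for $\mathcal U$) to pick up the factor $9=3^2$. Your explicit remark that the hypothesis $R_S^{-1}\ge S$ of Lemma \ref{lem:U_norm} follows from $R_S^{-1}\ge\bar S\ge S$ via \eqref{eq:S_S_bar} is a detail the paper leaves implicit, and is a worthwhile addition.
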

\begin{proof}
Using Lemma \ref{lem:F}, \ref{lem:rho_F} and \ref{lem:M}, we obtain that
$$
 \left\|
\begin{pmatrix}
v \\ q
\end{pmatrix}
-
\begin{pmatrix}
v^k \\ q^k
\end{pmatrix}
 \right\|_{\mathcal D} \leq \rho_1^k
  \left\|
\begin{pmatrix}
v \\ q
\end{pmatrix}
-
\begin{pmatrix}
v^0 \\ q^0
\end{pmatrix}
 \right\|_{\mathcal D}.
$$ 
Therefore, by Lemma \ref{lem:U_norm}, it holds
\begin{align*}
 \left\|
\begin{pmatrix}
u \\ p
\end{pmatrix}
-
\begin{pmatrix}
u^k \\ p^k
\end{pmatrix}
 \right\|_{\mathcal D}^2 & \leq 
 3 \left\|
\begin{pmatrix}
v \\ q
\end{pmatrix}
-
\begin{pmatrix}
v^k \\ q^k
\end{pmatrix}
 \right\|_{\mathcal D}^2 \leq 3\rho_1^{2k}
 \left\|
\begin{pmatrix}
v \\ q
\end{pmatrix}
-
\begin{pmatrix}
v^0 \\ q^0
\end{pmatrix}
 \right\|_{\mathcal D}^2 \\
 &\leq 9\rho_1^{2k} \left\|
\begin{pmatrix}
u \\ p
\end{pmatrix}
-
\begin{pmatrix}
u^0 \\ p^0
\end{pmatrix}
 \right\|_{\mathcal D}^2 .
\end{align*}
The desired result then follows.
\end{proof}
\begin{remark}\rm
 According to Remark \ref{rm:otherversion}, we will get a similar convergence with assumptions
 $R_A^{-1} > A, R_S^{-1} \geq S$ and
 $ \delta < \frac{1}{2},$
 and the rate is changed to $\tilde \rho_1(\delta, \gamma)$ given in Remark \ref{rm:otherversion}.
\end{remark}

\subsection{Convergence of the symmetrized inexact Uzawa method}
The symmetrized inexact Uzawa method \eqref{SIU_1}-\eqref{SIU_3} can be understood as the iteration \eqref{eq:iter_meth} for $(v, q)^{\intercal}$ with $\mathcal H$ being replaced by $\bar{\mathcal H}: =  
\begin{pmatrix}
\bar R_A^{-1} & 0 \\ 0 & -R_S^{-1}
\end{pmatrix}$ and then transfer to $(u,p)$ using \eqref{eq:vuk}.  The error operator of the renewed iteration \eqref{eq:iter_meth} is 
$$
\bar{\mathcal E} = I - \bar{\mathcal H}^{-1}\mathcal L^{-1} \mathcal A \, \mathcal U^{-1}.
$$
Convergence analysis of this iteration is similar. So we only outline the key formulae in the calculation. Let
\begin{align*}
\bar{\mathcal E}_A &= \begin{pmatrix}
\bar R_A^{-1} - A   &  (\bar R_A^{-1}R_A - I)B^\intercal \\ B(R_A\bar R_A^{-1} - I)  &  B\, R_A\, \bar R_A^{-1} R_A \,  B^\intercal + C - R_S^{-1}
\end{pmatrix},\\
\bar{\mathcal F} &= \bar{\mathcal D}^{-1/2}\mathcal J \mathcal L^{-1} \bar{\mathcal E}_{\mathcal A} \, \mathcal U^{-1} \mathcal J \bar{\mathcal D}^{-1/2} = \begin{pmatrix}
\bar E_A   &  \hat B^\intercal \\ \hat B  & - E_{\bar S}
\end{pmatrix},\\ 
\bar{\mathcal M} & = \begin{pmatrix}
\delta^{-1}\bar E_A^{1/2}  &  0 \\ 0  & I
\end{pmatrix},\qquad
\bar{\mathcal T} = 
\begin{pmatrix}
\delta^2 I & \delta \,  \bar E_A^{-1/2} \hat B^\intercal \\ 
\delta \, \hat B\,  \bar E_A^{-1/2}   &   R_S^{1/2}\,  \bar S \,  R_S^{1/2} - I
\end{pmatrix},
\end{align*}
with
$$
\bar{\mathcal D} = \begin{pmatrix}  
\bar R_A^{-1}  & 0 \\ 0 & R_S^{-1}
\end{pmatrix},
\qquad \bar E_A = I - \bar R_A^{1/2}A \bar R_A^{1/2},\qquad \hat B = R_S^{1/2} B(I - R_AA) \bar R_A^{1/2}.
$$
Similar to the proof of \eqref{eq:E_A}, under the assumption $R_A^{-1} > A$, we can prove that
\begin{equation}
\label{eq:E_A_bar} \hat B \bar E_A^{-1} \hat B^T  \leq  R_S^{1/2}\bar S R_S^{1/2},
\end{equation}
and
$$
\bar{\mathcal F} = \bar{\mathcal M} \bar{\mathcal T} \bar{\mathcal M}, \quad \rho( \bar{\mathcal F}) \leq \rho ( \bar{\mathcal T})
$$
Computing as in the proof of Lemma \ref{lem:M}, we get 
$$
\bar\Phi(\mu) =  \frac{\delta^2}{\mu - \delta^2} \hat B \bar E_A^{-1} \hat B^T + R_S^{1/2}\bar S R_S^{1/2} - (1 + \mu )I,
$$
and
$$
\bar h(\mu) = (\bar\Phi(\mu) q,q).
$$
By the same line as the proof of Theorem \ref{the:conv} for intervals $(-\infty,0)$ and $(\delta^2,+\infty)$ and using Lemma \ref{lem:U_norm}, we will get the following convergence result for the symmetrized inexact Uzawa method.
\begin{theorem}\label{the:conv_2}
 Assume that $R_A^{-1} >  A, R_S^{-1} \geq \bar S$ and
 $
 \delta < \frac{\sqrt{2} }{2},
 $
 then the symmetrized Uzawa method \eqref{SIU_1}-\eqref{SIU_3} is convergent with
 \begin{align}\label{eq:error1}
 \|u - u^k\|_{\bar R_A^{-1}}^2 + \|p-p^k\|_{R_S^{-1}}^2 \leq 9\rho_2^{2k}\left( \|u-u^0\|_{\bar R_A^{-1}}^{2} + \|p-p^0\|_{R_S^{-1}}^2 \right),
 \end{align} 
 where 
\begin{equation}\label{eq:rho2}
 \rho_2 = \max \left\{\ \frac{(\delta^2-\bar \gamma) - \sqrt{(\delta^2-\bar \gamma)^2 + 4\delta^2}}{2}, \frac{\delta^2 ( 1 + \sqrt{\delta^2 + 4})}{2}\right\}.
\end{equation}
\end{theorem}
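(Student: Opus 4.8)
The plan is to retrace, step by step, the argument that proves Theorem~\ref{the:conv}, working now in the weighted norm $\|\cdot\|_{\bar{\mathcal D}}$ in place of $\|\cdot\|_{\mathcal D}$. First I would reprove Lemma~\ref{lem:F} for the error operator $\bar{\mathcal E}$. Since $\bar{\mathcal D}=\bar{\mathcal H}\mathcal J$ and both $\bar{\mathcal H}$ and $\bar{\mathcal D}$ commute with $\mathcal J$, we have $\bar{\mathcal H}^{-1}=\bar{\mathcal D}^{-1}\mathcal J$ and $\bar{\mathcal E}=\bar{\mathcal D}^{-1}\mathcal J\,\mathcal L^{-1}\bar{\mathcal E}_{\mathcal A}\,\mathcal U^{-1}$ with $\bar{\mathcal E}_{\mathcal A}=\bar{\mathcal A}-\mathcal A$ and $\bar{\mathcal A}:=\mathcal L\bar{\mathcal H}\mathcal U=\mathcal U^{\intercal}\bar{\mathcal H}\mathcal U$. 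Because $\mathcal L^{-1}\bar{\mathcal E}_{\mathcal A}\mathcal U^{-1}=\bar{\mathcal H}-\mathcal U^{-\intercal}\mathcal A\,\mathcal U^{-1}$ is symmetric, the matrix $\bar{\mathcal D}^{-1}\mathcal J\mathcal L^{-1}\bar{\mathcal E}_{\mathcal A}\mathcal U^{-1}\mathcal J$ is self-adjoint with respect to $(\cdot,\cdot)_{\bar{\mathcal D}}$ and similar to the symmetric matrix $\bar{\mathcal F}$; combined with $\|\mathcal J\bs x\|_{\bar{\mathcal D}}=\|\bs x\|_{\bar{\mathcal D}}$ this gives $\|\bar{\mathcal E}\|_{\bar{\mathcal D}}\le\rho(\bar{\mathcal F})$, exactly as in Lemma~\ref{lem:F}.

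Next I would reduce $\bar{\mathcal F}$ to $\bar{\mathcal T}$, mirroring Lemma~\ref{lem:rho_F}. The key operator inequality \eqref{eq:E_A_bar} follows from the identity $\bar R_A A R_A = R_A A\bar R_A$, an immediate consequence of $\bar R_A=2R_A-R_AA\,R_A$: together with $I-\bar R_A A=(I-R_AA)^2$ it yields $(I-R_AA)(\bar R_A^{-1}-A)^{-1}(I-AR_A)=\bar R_A$, hence $\hat B\,\bar E_A^{-1}\hat B^{\intercal}=R_S^{1/2}B\bar R_A B^{\intercal}R_S^{1/2}\le R_S^{1/2}\bar S R_S^{1/2}$. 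Since $\bar E_A=I-\bar R_A^{1/2}A\bar R_A^{1/2}$ is conjugate to $I-A\bar R_A$, its spectrum coincides with that of $(I-R_AA)^2=I-\bar R_A A$ and thus lies in $(0,\delta^2]$; consequently $\|\bar{\mathcal M}\|^2=1$ in the factorization $\bar{\mathcal F}=\bar{\mathcal M}\bar{\mathcal T}\bar{\mathcal M}$, so $\rho(\bar{\mathcal F})\le\rho(\bar{\mathcal T})$.

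The core of the argument is the bound $\rho(\bar{\mathcal T})\le\rho_2$, following the proof of Lemma~\ref{lem:M}. Let $\lambda$ be an eigenvalue of the symmetric matrix $\bar{\mathcal T}$ with eigenvector $(v,q)^{\intercal}$. If $\lambda\in[0,\delta^2]$ then $|\lambda|\le\delta^2\le\rho_2$ and there is nothing to prove. Otherwise $\lambda\ne\delta^2$, so I would solve for $v$ in the first block equation, substitute into the second, and conclude that $\lambda$ is a root of $\bar h(\mu)=(\bar\Phi(\mu)q,q)$; since $\bar h'(\mu)<0$ for $\mu\ne\delta^2$, $\bar h$ is strictly decreasing on $(-\infty,0)$ and on $(\delta^2,\infty)$. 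On $(-\infty,0)$ the scalar $\delta^2/(\mu-\delta^2)$ is negative while $\delta^2/(\mu-\delta^2)+1>0$, so \eqref{eq:E_A_bar} and \eqref{eq:RSR} give the lower bound $\bar\Phi(\mu)\ge\big[(\delta^2/(\mu-\delta^2)+1)(1-\bar\gamma)-(1+\mu)\big]I$, which is positive below the negative root of $\mu^2+(\bar\gamma-\delta^2)\mu-\delta^2$; strict monotonicity then confines every negative eigenvalue to the interval between that root and $0$. On $(\delta^2,\infty)$ the scalar $\delta^2/(\mu-\delta^2)$ is positive, so \eqref{eq:E_A_bar} and the assumption $R_S^{-1}\ge\bar S$ give the upper bound $\bar\Phi(\mu)\le\big(\delta^2/(\mu-\delta^2)-\mu\big)I$, which is negative above the positive root of $\mu^2-\delta^2\mu-\delta^2$; this bounds every eigenvalue exceeding $\delta^2$. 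It is precisely here that the hypothesis $\delta<\sqrt2/2$ is used, since it is exactly the condition making that positive root less than $1$, while $\bar\gamma<1$ (implied by $R_S^{-1}\ge\bar S$) makes the negative root exceed $-1$. Assembling the three cases gives $\rho(\bar{\mathcal T})\le\rho_2<1$ with $\rho_2$ as in \eqref{eq:rho2}.

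Finally I would transfer back to $(u,p)$. The computation in Lemma~\ref{lem:U_norm} goes through with $R_A$ replaced by $\bar R_A$: using $R_A\bar R_A^{-1}R_A\le R_A$ (from $\bar R_A\ge R_A$) and $S\le R_S^{-1}$ one obtains $\max\{\|\mathcal U\bs x\|_{\bar{\mathcal D}}^2,\|\mathcal U^{-1}\bs x\|_{\bar{\mathcal D}}^2\}\le3\|\bs x\|_{\bar{\mathcal D}}^2$. Combining $\|\bar{\mathcal E}\|_{\bar{\mathcal D}}\le\rho_2$ with the change of variables $(v,q)^{\intercal}=\mathcal U(u,p)^{\intercal}$ and two applications of this bound (one for $\mathcal U^{-1}$ and one for $\mathcal U$), exactly as in the proof of Theorem~\ref{the:conv}, produces the factor $9$ and the stated estimate \eqref{eq:error1}. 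The main obstacle is the spectral estimate for $\bar{\mathcal T}$ on $(\delta^2,\infty)$, where the sharp threshold $\delta<\sqrt2/2$ is extracted; it rests on the operator inequality \eqref{eq:E_A_bar}, whose proof in turn hinges on the non-obvious identity $\bar R_A A R_A=R_A A\bar R_A$.
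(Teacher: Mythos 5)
Your proposal is correct and follows exactly the route the paper sketches for this theorem: symmetrize the error operator to $\bar{\mathcal F}$, rescale to $\bar{\mathcal T}$ using the key inequality \eqref{eq:E_A_bar} (your derivation of it via the commutation identity $\bar R_A A R_A = R_A A \bar R_A$ is precisely the intended computation), run the secular-function argument on the intervals $(-\infty,0)$ and $(\delta^2,\infty)$, and transfer back with the $\bar{\mathcal D}$-norm analogue of Lemma \ref{lem:U_norm}, which you rightly note must be re-proved with $\bar R_A$ in place of $R_A$. One remark: the roots your quadratics produce are $\bigl((\bar\gamma-\delta^2)+\sqrt{(\delta^2-\bar\gamma)^2+4\delta^2}\bigr)/2$ and $\delta\bigl(\delta+\sqrt{\delta^2+4}\bigr)/2$, so your computation actually reveals that the displayed formula \eqref{eq:rho2} carries a sign slip in its first term and a misplaced factor of $\delta$ in its second (which should read $\delta(\delta+\sqrt{\delta^2+4})/2$, still less than $1$ exactly when $\delta<\sqrt{2}/2$); your version is the correct one.
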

\begin{remark}\rm
Again we can obtain similar estimate by using assumptions $R_A^{-1} \geq A$ $R_S^{-1} \geq S$ and $\delta < \frac{1}{2}$ with a different contraction factor 
$$
\tilde\rho_2 = \max \left\{ \frac{(\gamma - \delta^2) + \sqrt{(\gamma - \delta^2)^2 + 4\left( \delta^3(1 - \gamma) + \delta^2 \right)}}{2},\ \frac{1+\delta + \sqrt{\delta^2 + 2\delta + 5}}{2} \delta \right\}.
$$
\end{remark}

\subsection{Convergence of the IUM method}
We consider the IUM method \eqref{IUM_1}-\eqref{IUM_2}. Unfolding equation \eqref{IUM_1} and writting two consecutive steps of IUM together, we get
\begin{align}
\label{IUM_M1}u^{k+1/2} & = u^k + R_A(f - Au^k - B^\intercal p^k), \\
\label{IUM_M2}u^{k+1} & = u^{k+1/2} + R_A(f - Au^{k+1/2} - B^\intercal p^k), \\
\label{IUM_M3}p^{k+1} & = p^k - R_S(g - Bu^{k+1} + Cp^k) ,\\
\label{IUM_M4}u^{k+3/2} & = u^{k+1} + R_A(f - Au^{k + 1} - B^\intercal p^{k+1}), \\
\label{IUM_M5}u^{k+2} & = u^{k+3/2} + R_A(f - Au^{k+3/2} - B^\intercal p^{k+1}), \\
\label{IUM_M6}p^{k+2} & = p^{k+1} - R_S(g - Bu^{k+2} + Cp^{k+1}) .
\end{align}
We can regroup the iterations and view \eqref{IUM_M2}-\eqref{IUM_M4} as one iteration step of the symmetrized inexact Uzawa method \eqref{SIU_1}-\eqref{SIU_3}, i.e., $(u^{k+3/2},p^{k+1})^\intercal$ is generated by the symmetrized Uzawa method \eqref{SIU_1}-\eqref{SIU_3} from $(u^{k+1/2},p^{k})^\intercal$ and thus the sequence $\{(u^{k+1/2},p^{k})^\intercal \}$ is convergent with usual assumptions. To prove the convergence of $\{(u^{k+1},p^{k+1})^\intercal \}$, we need the following preparation. 

\begin{lemma}
Assume that $R_A^{-1} > A$. Then 
$$
\| I - R_AA\|_{\bar R_A^{-1}} < 1.
$$ 
\end{lemma}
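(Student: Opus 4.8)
The plan is to reduce everything to a scalar statement about the eigenvalues of $\widehat A := R_A^{1/2} A R_A^{1/2}$. First I would note that $\bar R_A$ is SPD — indeed $R_A < \bar R_A$ by \eqref{eq:rel_R_bar} — so the norm $\|\cdot\|_{\bar R_A^{-1}}$ makes sense, and that the hypothesis $R_A^{-1} > A$ (together with $A$ SPD) forces every eigenvalue $\lambda$ of $\widehat A$ to lie in the open interval $(0,1)$.

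The core is a change of basis by $R_A^{1/2}$. Setting $y = R_A^{-1/2} x$ and using the elementary identities $(I - R_A A) R_A^{1/2} = R_A^{1/2}(I - \widehat A)$ and $R_A^{-1/2}\bar R_A R_A^{-1/2} = 2I - \widehat A$, one obtains
\begin{align*}
\|x\|_{\bar R_A^{-1}}^2 &= \bigl((2I - \widehat A)^{-1} y,\, y\bigr),\\
\|(I - R_A A)x\|_{\bar R_A^{-1}}^2 &= \bigl((2I - \widehat A)^{-1}(I - \widehat A)^2 y,\, y\bigr).
\end{align*}
Since $\widehat A$, $I - \widehat A$, and $(2I - \widehat A)^{-1}$ are simultaneously diagonalizable, the claim $\|(I - R_A A)x\|_{\bar R_A^{-1}} < \|x\|_{\bar R_A^{-1}}$ for all $x \neq 0$ reduces to the scalar inequality $(1-\lambda)^2/(2-\lambda) < 1/(2-\lambda)$ for each eigenvalue $\lambda \in (0,1)$, which holds since $(1-\lambda)^2 < 1$ there. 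Equivalently, and this is the form I would likely state in the paper, one can verify the exact identity $\bar R_A^{-1} - (I - R_A A)^{\intercal}\,\bar R_A^{-1}\,(I - R_A A) = A$, the right-hand side being SPD; the only nonroutine ingredient is the commutation $\bar R_A A R_A = R_A A \bar R_A$, which is immediate from $\bar R_A = 2R_A - R_A A R_A$.

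The argument is short, so I do not expect a real obstacle; the one subtlety worth flagging is that $\|I - R_A A\|_{\bar R_A^{-1}}$ is an \emph{operator} norm, not a spectral radius, so one cannot merely cite $\rho(I - R_A A) = \delta < 1$. The bound holds because $I - R_A A$ is self-adjoint with respect to $(\cdot,\cdot)_{\bar R_A^{-1}}$ — which is exactly the commutation identity above — whence this operator norm equals $\rho(I - R_A A) = \delta$; producing that self-adjointness (or, alternatively, the displayed identity) is the only genuinely computational step.
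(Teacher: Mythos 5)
Your proof is correct, and its closing paragraph is in substance the paper's own argument: the paper likewise reduces the claim to the fact that $I - R_AA$ is self-adjoint with respect to $(\cdot,\cdot)_{\bar R_A^{-1}}$, so that the operator norm equals $\rho(I - R_AA) = \delta < 1$. The difference lies in how that self-adjointness is certified. The paper's one-line justification is ``$(I - R_AA)^2 = I - \bar R_AA$ is symmetric in the inner product $(\cdot,\cdot)_{\bar R_A^{-1}}$, so is $I - R_AA$,'' which read literally is a non sequitur (an operator whose square is self-adjoint need not itself be self-adjoint); the intended content is exactly the commutation identity $\bar R_A A R_A = R_A A\bar R_A$ that you supply, so your version actually closes a small logical gap in the paper's proof. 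Your primary route --- conjugating by $R_A^{1/2}$, writing both quadratic forms through $\widehat A = R_A^{1/2}AR_A^{1/2}$, and reducing to $(1-\lambda)^2/(2-\lambda) < 1/(2-\lambda)$ for $\lambda\in(0,1)$ --- is a more explicit, fully checkable computation; the equivalent identity $\bar R_A^{-1} - (I - R_AA)^{\intercal}\bar R_A^{-1}(I - R_AA) = A$ (eigenvalue-wise $1-(1-\lambda)^2 = \lambda(2-\lambda)$) is also verified correctly. One point worth keeping: strictness of the bound uses $\lambda\in(0,1)$, i.e.\ both $R_A^{-1}>A$ and the positive definiteness of $A$, which you rightly flagged, and which is also why the operator norm is genuinely $\delta$ rather than merely bounded by it.
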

\begin{proof}
By definition 
$$
(I - R_AA)^2 = I - \bar{R}_AA
$$
is symmetric in the inner product $(\cdot,\cdot)_{\bar{R}_A^{-1}}$, so is $I - R_AA$. Therefore $\| I - R_AA\|_{\bar R_A^{-1}} = \rho (I - R_AA) < 1$.
\end{proof}

\begin{lemma}
Assume that $R_S^{-1} \geq \bar S$, then we have the inequality 
\begin{equation}
\label{eq:RSB}
B^\intercal R_S B \leq \bar R_A^{-1}.
\end{equation}
\end{lemma}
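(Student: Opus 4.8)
The plan is to reduce the matrix inequality to a statement about the spectral radius of a product, and then exploit the fact that $XX^\intercal$ and $X^\intercal X$ share the same nonzero eigenvalues. First I would observe that since $C \geq 0$ and $\bar S = B\bar R_A B^\intercal + C$, the hypothesis $R_S^{-1} \geq \bar S$ immediately gives the weaker bound $R_S^{-1} \geq B\,\bar R_A\, B^\intercal$. This is the only place the assumption is used.

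Next I would conjugate by $R_S^{1/2}$ (a symmetric positive definite congruence, which preserves the ordering) to rewrite $R_S^{-1} \geq B\,\bar R_A\, B^\intercal$ as $I \geq R_S^{1/2} B\,\bar R_A\, B^\intercal R_S^{1/2}$. Setting $W = R_S^{1/2} B\, \bar R_A^{1/2}$, the right-hand side is exactly $W W^\intercal$, so $\rho(WW^\intercal) \leq 1$. Since $W^\intercal W$ and $WW^\intercal$ are both symmetric positive semidefinite and have the same nonzero eigenvalues, it follows that $\rho(W^\intercal W) \leq 1$, hence $W^\intercal W \leq I$, i.e.
$$
\bar R_A^{1/2} B^\intercal R_S B\, \bar R_A^{1/2} \leq I.
$$
Conjugating this inequality by $\bar R_A^{-1/2}$ (again a symmetric positive definite congruence, legitimate because $\bar R_A > 0$ by \eqref{eq:rel_R_bar}) yields $B^\intercal R_S B \leq \bar R_A^{-1}$, which is the claim.

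There is no real obstacle here; the argument is essentially bookkeeping. The one point to state carefully is the passage from $\rho(WW^\intercal)\le 1$ to $W^\intercal W \le I$: one should note that $W^\intercal W$ is symmetric positive semidefinite so that its spectral radius equals its largest eigenvalue, and that a symmetric matrix with all eigenvalues in $[0,1]$ is $\leq I$. Alternatively, one can avoid the $XX^\intercal$/$X^\intercal X$ trick entirely and argue directly: for any $x$, writing $y = B^\intercal R_S B x$, one has $(B^\intercal R_S B x, x)^2 = (R_S^{1/2} y', R_S^{-1/2} z)^2$ after suitable grouping and applies Cauchy–Schwarz together with $R_S^{-1}\ge B\bar R_A B^\intercal$; but the eigenvalue route above is cleaner and shorter, so that is the one I would present.
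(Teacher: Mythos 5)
Your proof is correct, but it takes a genuinely different route from the paper's. The paper works with the quadratic form directly: setting $q = R_S B v$, it writes $(B^\intercal R_S B v, v) = \|q\|_{R_S^{-1}}^2$ and then bounds $\|q\|_{R_S^{-1}}$ by a duality (sup) argument, using Cauchy--Schwarz in the $(\cdot,\cdot)_{R_S^{-1}}$ and $(\cdot,\cdot)_{\bar R_A^{-1}}$ inner products together with the bound $\|B^\intercal p\|_{\bar R_A} \leq \|p\|_{R_S^{-1}}$, which comes from exactly the same input you isolate, namely $B\bar R_A B^\intercal \leq \bar S \leq R_S^{-1}$. Your version replaces that variational argument with the spectral identity $\rho(WW^\intercal) = \rho(W^\intercal W)$ for $W = R_S^{1/2} B \bar R_A^{1/2}$; at bottom both proofs express the same fact (the operator norm of a map equals that of its adjoint between the relevant weighted spaces), but yours is shorter and purely algebraic, at the modest cost of invoking the square roots $R_S^{1/2}$ and $\bar R_A^{\pm 1/2}$. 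The only hypothesis you should make explicit is $\bar R_A > 0$, which you correctly attribute to \eqref{eq:rel_R_bar} under the standing assumption $R_A^{-1} > A$ in force throughout that section; the paper's proof needs the same positivity for $\|\cdot\|_{\bar R_A}$ and $\|\cdot\|_{\bar R_A^{-1}}$ to be norms, so neither argument is more demanding than the other on this point.
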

\begin{proof}
For any $v \in \mathbb R^n$, let $q = R_SBv \in \mathbb R^m$. 
Then
$$
(B^\intercal R_S Bv,v) = (R_SB v, B v) = (R_S^{-1}q,q) = \|q\|_{R_S^{-1}}^2.
$$
On the other hand,
\begin{align*}
\|q\|_{R_S^{-1}} & = \sup\limits_{p \in \mathbb R^m} \frac{(q,p)_{R_S^{-1}}}{\|p\|_{R_S^{-1}}}  = \sup\limits_{p \in \mathbb R^m} \frac{(Bv,p)}{\|p\|_{R_S^{-1}}} = \sup\limits_{p \in \mathbb R^m} \frac{(v,B^\intercal p)}{\|p\|_{R_S^{-1}}} \\
& \leq \sup\limits_{p \in \mathbb R^m} \frac{\|v\|_{\bar R_A^{-1}} \|B^\intercal p\|_{\bar R_A}}{\|p\|_{R_S^{-1}}} \leq \|v\|_{\bar R_A^{-1}},
\end{align*}
where in the last step, we have used
$$
B\bar R_A B^\intercal \leq B\bar R_A B^\intercal +C = \bar S \leq R_S^{-1},
$$
to get $\|p\|_{R_S^{-1}} \geq \|B^\intercal p\|_{\bar R_A}$.
The desired result then follows.
\end{proof}

We can now bound the error of $(u^{k+1},p^{k+1})$ by that of $(u^{k+1/2},p^k)$.
\begin{lemma}\label{lem:upk}
Assume that $R_A^{-1} >  A$ and $R_S^{-1} \geq \bar S$, there hold
 \begin{align*}
 \|u - u^{k+1}\|_{\bar R_A^{-1}}^2 & \leq 2 \left( \|u - u^{k+1/2}\|_{\bar R_A^{-1}}^2 + \|p - p^k\|_{R_S^{-1}}^2 \right), \\
 \|p - p^{k+1}\|_{R_S^{-1}}^2 & \leq 2 \left( \|u - u^{k+1/2}\|_{\bar R_A^{-1}}^2 + \|p - p^k\|_{R_S^{-1}}^2 \right).
 \end{align*}
\end{lemma}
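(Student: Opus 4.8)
The plan is to express the errors of $u^{k+1}$ and $p^{k+1}$ directly in terms of $u-u^{k+1/2}$ and $p-p^k$ using the two iteration steps \eqref{IUM_M2} and \eqref{IUM_M3}, then bound each term. First I would handle $u^{k+1}$. Since the exact solution satisfies $u = u + R_A(f - Au - B^\intercal p)$ (as the residual vanishes), subtracting \eqref{IUM_M2} gives
\[
u - u^{k+1} = (I - R_AA)(u - u^{k+1/2}) - R_AB^\intercal(p - p^k).
\]
Taking the $\bar R_A^{-1}$-norm, applying the triangle inequality together with $(a+b)^2 \le 2a^2 + 2b^2$, I would use the previous lemma $\|I - R_AA\|_{\bar R_A^{-1}} < 1$ to control the first term by $\|u - u^{k+1/2}\|_{\bar R_A^{-1}}^2$, and for the second term observe that $\|R_AB^\intercal(p-p^k)\|_{\bar R_A^{-1}}^2 = (\bar R_A^{-1}R_AB^\intercal(p-p^k), R_AB^\intercal(p-p^k))$; the goal is to bound this by $\|p - p^k\|_{R_S^{-1}}^2$. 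The natural tool is \eqref{eq:RSB}, $B^\intercal R_S B \le \bar R_A^{-1}$, which after suitable manipulation (relating $R_A\bar R_A^{-1}R_A$ to $\bar R_A$ via $\bar R_A^{-1} \le R_A^{-1}$, i.e. $R_A \le \bar R_A$, from \eqref{eq:rel_R_bar}) should give $\|R_AB^\intercal q\|_{\bar R_A^{-1}} \le \|B^\intercal q\|_{\bar R_A} \le \|q\|_{R_S^{-1}}$ exactly as in the proof of \eqref{eq:RSB}.

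Next I would handle $p^{k+1}$. Subtracting \eqref{IUM_M3} from the exact relation $p = p - R_S(g - Bu + Cp)$ gives
\[
p - p^{k+1} = (I - R_S(BR_AB^\intercal \cdot 0 + C))\text{-type expression};
\]
more precisely, since $g - Bu^{k+1} + Cp^k = g - Bu + Cp - B(u^{k+1}-u) + C(p^k - p) = -B(u^{k+1}-u) + C(p^k-p)$, we get
\[
p - p^{k+1} = (I - R_SC)(p - p^k) - R_SB(u - u^{k+1}).
\]
Hmm — this reintroduces $u - u^{k+1}$, so I would first substitute the expression for $u - u^{k+1}$ found above, yielding $p - p^{k+1}$ purely in terms of $u - u^{k+1/2}$ and $p - p^k$. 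Then I would take the $R_S^{-1}$-norm, split via $(a+b)^2 \le 2a^2+2b^2$, and bound each resulting term: the term involving $u - u^{k+1/2}$ through a combination of $\|I-R_AA\|_{\bar R_A^{-1}}<1$ and the dual estimate $\|R_SB v\|_{R_S^{-1}} \le \|v\|_{\bar R_A^{-1}}$ (which is \eqref{eq:RSB} read in the other direction, as in its proof), and the term involving $p - p^k$ using $\|I - R_SC\|_{R_S^{-1}} \le 1$ (since $0 \le C \le \bar S \le R_S^{-1}$) together with the contribution from $R_SB R_AB^\intercal(p-p^k)$ bounded again via \eqref{eq:RSB} and the relations among $R_A$, $\bar R_A$.

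The main obstacle I anticipate is the bookkeeping of constants so that both final bounds come out with the clean factor $2$ rather than something larger. Because $p - p^{k+1}$ involves a two-level nesting (it contains $u - u^{k+1}$, which itself contains a $u$-error and a $p$-error term), a careless application of $(a+b)^2 \le 2a^2+2b^2$ would produce a factor like $4$ or $8$. The resolution is to not expand fully: treat $u - u^{k+1} = (I-R_AA)(u-u^{k+1/2}) - R_AB^\intercal(p-p^k)$ as a single quantity whose $\bar R_A^{-1}$-norm squared is already bounded by $2(\|u-u^{k+1/2}\|_{\bar R_A^{-1}}^2 + \|p-p^k\|_{R_S^{-1}}^2)$ (the first claimed inequality), and then for $p - p^{k+1} = (I-R_SC)(p-p^k) - R_SB(u-u^{k+1})$ apply the split once, using $\|R_SB(u-u^{k+1})\|_{R_S^{-1}} \le \|u - u^{k+1}\|_{\bar R_A^{-1}}$ to inherit that bound directly. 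One then needs $\|(I-R_SC)(p-p^k)\|_{R_S^{-1}}^2 \le \|p-p^k\|_{R_S^{-1}}^2$, and the remaining arithmetic $2\cdot\frac12\|p-p^k\|^2 + 2\cdot\frac12(\text{bound on }u-u^{k+1})$ must be arranged to close at $2(\cdots)$ — this requires being slightly careful, perhaps absorbing the $(I-R_SC)$ factor strictly inside one half of the split. This constant-tracking is routine but is the only delicate point; everything else follows from the lemmas already established.
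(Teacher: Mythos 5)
Your treatment of the first inequality is exactly the paper's: the error equation $u - u^{k+1} = (I-R_AA)(u-u^{k+1/2}) - R_AB^\intercal(p-p^k)$, one application of $(a+b)^2\le 2a^2+2b^2$, the bound $\|I-R_AA\|_{\bar R_A^{-1}}<1$, and the chain $R_A\bar R_A^{-1}R_A\le R_A\le \bar R_A$ followed by $B\bar R_AB^\intercal\le\bar S\le R_S^{-1}$ are all correct and complete.

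For the second inequality there is a genuine gap. Your fallback plan --- keep $u-u^{k+1}$ unexpanded in $p-p^{k+1}=(I-R_SC)(p-p^k)+R_SB(u-u^{k+1})$ (note the correct sign is $+$, a harmless slip in your derivation), split once, and inherit the first bound --- cannot produce the constant $2$: it yields
$$
\|p-p^{k+1}\|_{R_S^{-1}}^2 \le 2\|p-p^k\|_{R_S^{-1}}^2 + 2\|u-u^{k+1}\|_{\bar R_A^{-1}}^2 \le 4\|u-u^{k+1/2}\|_{\bar R_A^{-1}}^2+6\|p-p^k\|_{R_S^{-1}}^2,
$$
and no weighted Young inequality repairs this, since the $p$-error enters both through $(I-R_SC)(p-p^k)$ and through $u-u^{k+1}$, so the coefficient of $\|p-p^k\|_{R_S^{-1}}^2$ is $(1+t)+2(1+1/t)\ge 3+2\sqrt{2}$ for every weight $t>0$. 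The missing idea is the algebraic cancellation obtained by substituting the $u$-error equation \emph{and then regrouping}: the two contributions acting on $p-p^k$ combine as $I-R_SC-R_SBR_AB^\intercal=I-R_SS$, so that
$$
p - p^{k+1} = (I-R_SS)(p-p^k) + R_SB(I-R_AA)(u-u^{k+1/2})
$$
is a sum of only \emph{two} terms. Since $0<R_S^{1/2}SR_S^{1/2}\le R_S^{1/2}\bar SR_S^{1/2}\le I$ gives $\|I-R_SS\|_{R_S^{-1}}\le 1$, and \eqref{eq:RSB} together with $\|I-R_AA\|_{\bar R_A^{-1}}<1$ handles the second term, a single split then delivers the stated factor $2$. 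Your fear that full expansion costs a factor $4$ or $8$ is warranted only if you treat the expansion as three separate terms; the regrouping into $I-R_SS$ is precisely the point of the paper's proof and is what your proposal is missing.
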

\begin{proof}
Equation \eqref{IUM_M2} implies that
\begin{equation}\label{eq:uk}
u - u^{k+1} = (I - R_AA)(u - u^{k+1/2}) - R_AB^\intercal (p - p^k),
\end{equation}
therefore,
\begin{align*}
\|u - u^{k+1}\|_{\bar R_A^{-1}}^2 & \leq 2 \left( \|(I - R_AA)(u - u^{k+1/2})\|^2_{\bar R_A^{-1}}  + \|R_A B^\intercal (p - p^k)\|_{\bar R_A^{-1}}^2 \right) 
\\
& \leq 2 \left(\|u - u^{k+1/2}\|^2_{\bar R_A^{-1}}  + \|p - p^k\|_{R_S^{-1}}^2 \right).
\end{align*}
Here in the second inequality we have used 
$$
BR_A\bar R_A^{-1}R_AB^\intercal \leq B\bar R_AB^\intercal \leq \bar S \leq R_S^{-1}.
$$
Equations \eqref{IUM_M3} and \eqref{eq:uk} imply that 
$$
p - p^{k+1} = (I - R_SS)(p - p^k) +R_SB(I - R_AA)(u - u^{k+1/2}),
$$
Then, it holds
\begin{align*}
\|p - p^{k+1}\|_{R_S^{-1}}^2 & \leq 2 \left( \|(I - R_SS)(p - p^k)\|_{R_S^{-1}}^2 + \|R_SB(I - R_AA)(u - u^{k+1/2})\|_{R_S^{-1}}^2 \right) \\
& \leq 2 \left( \|p - p^k\|_{R_S^{-1}}^2 + \|(I - R_AA)(u - u^{k+1/2})\|_{\bar R_A^{-1}}^2 \right) \\
& \leq 2 \left( \|p - p^k\|_{R_S^{-1}}^2 + \|u - u^{k+1/2}\|_{\bar R_A^{-1}}^2 \right).
\end{align*}
Here, in the second inequality, we have used inequality \eqref{eq:RSB}.
\end{proof}

Theorem \ref{the:conv_2} and Lemma \ref{lem:upk} imply the following result. 
\begin{theorem}\label{the:conv_3}
 Assume that $R_A^{-1} >  A, R_S^{-1} \geq \bar S$ and
 $\delta < \frac{\sqrt{2} }{2},$
 then the IUM method \eqref{IUM_1}-\eqref{IUM_2} is convergent with
 \begin{align}\label{eq:error2}
 \|u - u^k\|_{\bar R_A^{-1}}^2 + \|p-p^k\|_{R_S^{-1}}^2 \leq  36\rho_2^{2k}\left( \| u-u^{1/2} \|_{\bar R_A^{-1}}^{2} + \|p-p^0\|_{R_S^{-1}}^2 \right),
\end{align} 
where $\rho_2$ is given in \eqref{eq:rho2}.
\end{theorem}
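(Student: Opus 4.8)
The plan is to assemble Theorem \ref{the:conv_3} from two pieces already in place: Theorem \ref{the:conv_2}, which controls the sequence $\{(u^{k+1/2}, p^k)^\intercal\}$ generated by the symmetrized inexact Uzawa method, and Lemma \ref{lem:upk}, which bounds the error of $(u^{k+1}, p^{k+1})^\intercal$ in terms of that of $(u^{k+1/2}, p^k)^\intercal$. First I would make precise the regrouping observed before Lemma \ref{lem:upk}: the iterates \eqref{IUM_M2}--\eqref{IUM_M4} show that $(u^{k+3/2}, p^{k+1})^\intercal$ is exactly one SIUM step \eqref{SIU_1}--\eqref{SIU_3} applied to $(u^{k+1/2}, p^k)^\intercal$. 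Hence, by induction on $k$, $(u^{k+1/2}, p^k)^\intercal$ is the $k$-th SIUM iterate starting from $(u^{1/2}, p^0)^\intercal$, and Theorem \ref{the:conv_2} applies verbatim to give
\begin{equation*}
\|u - u^{k+1/2}\|_{\bar R_A^{-1}}^2 + \|p - p^k\|_{R_S^{-1}}^2 \leq 9\rho_2^{2k}\left( \|u - u^{1/2}\|_{\bar R_A^{-1}}^2 + \|p - p^0\|_{R_S^{-1}}^2 \right).
\end{equation*}

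Next I would invoke Lemma \ref{lem:upk}, whose two inequalities, when added together, yield
\begin{equation*}
\|u - u^{k+1}\|_{\bar R_A^{-1}}^2 + \|p - p^{k+1}\|_{R_S^{-1}}^2 \leq 4\left( \|u - u^{k+1/2}\|_{\bar R_A^{-1}}^2 + \|p - p^k\|_{R_S^{-1}}^2 \right).
\end{equation*}
Combining the two displays (and relabeling $k+1 \to k$) gives the factor $4 \cdot 9 = 36$ and the bound $\rho_2^{2(k-1)}$; one then absorbs the harmless $\rho_2^{-2}$ into the leading constant, or equivalently notes that the statement as written with $\rho_2^{2k}$ on the right is even weaker since $\rho_2 < 1$, so the inequality in \eqref{eq:error2} follows a fortiori. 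The assumptions $R_A^{-1} > A$, $R_S^{-1} \geq \bar S$, $\delta < \sqrt{2}/2$ are precisely those needed by both Theorem \ref{the:conv_2} and Lemma \ref{lem:upk}, so nothing extra is required.

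There is essentially no hard analytic step remaining — all the spectral work was done in Lemmas \ref{lem:rho_F}, \ref{lem:M} and their SIUM analogues, and all the norm-equivalence work in Lemmas \ref{lem:U_norm} and \ref{lem:upk}. The only point demanding a little care is the bookkeeping of the half-integer indexing: one must verify that the regrouping \eqref{IUM_M2}--\eqref{IUM_M4} genuinely matches \eqref{SIU_1}--\eqref{SIU_3} step for step (the first SIUM update uses $R_A$ on $u^{k+1/2}$ with the old pressure $p^k$, which is \eqref{IUM_M2}; the pressure update \eqref{IUM_M3} uses $u^{k+1}$; and the second velocity correction uses $u^{k+1}$ with the new pressure $p^{k+1}$, which is \eqref{IUM_M4}), and that the initialization is $(u^{1/2}, p^0)^\intercal$ rather than $(u^0, p^0)^\intercal$ — this is exactly why the right-hand side of \eqref{eq:error2} features $u^{1/2}$. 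I expect this index-matching to be the main (and only) obstacle, and it is purely a matter of careful transcription rather than genuine difficulty.
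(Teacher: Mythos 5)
Your proposal follows exactly the paper's (one-sentence) proof: regroup \eqref{IUM_M2}--\eqref{IUM_M4} as a SIUM step so that $\{(u^{k+1/2},p^k)^\intercal\}$ is the SIUM sequence started from $(u^{1/2},p^0)^\intercal$, apply Theorem \ref{the:conv_2} to it, and then pass to $(u^{k+1},p^{k+1})^\intercal$ via Lemma \ref{lem:upk}, picking up the factor $4\cdot 9=36$. One slip in your bookkeeping: after relabeling you obtain $36\rho_2^{2(k-1)}$, and your claim that the stated bound $36\rho_2^{2k}$ ``follows a fortiori'' is backwards --- since $\rho_2<1$ we have $\rho_2^{2k}\leq\rho_2^{2(k-1)}$, so the theorem as printed is the \emph{stronger} assertion and does not follow from your display; the honest conclusion of this argument is $36\rho_2^{2(k-1)}$ (equivalently $36\rho_2^{-2}\cdot\rho_2^{2k}$). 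This off-by-one is present in the paper's own terse proof as well, so your first suggested remedy (absorbing $\rho_2^{-2}$ into the constant) is the right one; only the second, ``a fortiori'' justification should be deleted.
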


\section{Approximate Block Factorization Preconditioner for GMRes}

In this section, we will construct approximate block factorization preconditioners for system \eqref{saddle_pro}. 
Recall the block decomposition
\begin{equation}
\label{eq:block_decom}
\hat{\mathcal A} 
= \begin{pmatrix}
R_A^{-1}   &  B^{\intercal}  \\ B  &  B\, R_AB^\intercal - R_S^{-1}
\end{pmatrix} = 
\mathcal L\, \mathcal H \, \mathcal U
\end{equation}
The action $(\mathcal L\, \mathcal H)^{-1}$ can be understood as a block Gauss-Seidel iteration and $\mathcal U^{-1}$ is a distribution or change of variables. 

We consider two preconditioners for $\mathcal A$.  Define the operator $\mathcal G: \mathbb R^{n+m} \rightarrow  \mathbb R^{n+m}$ as
\begin{equation}\label{eq:Matrix_Pre_DGS_curl}
\mathcal G = \mathcal U^{-1}(\mathcal L\, \mathcal H )^{-1}=
\begin{pmatrix}
I  & -R_A B^{\intercal} \\ 0  &   I
\end{pmatrix}
\begin{pmatrix}
R_A^{-1}  & 0 \\ B  & R_S^{-1}
\end{pmatrix}^{-1},
\end{equation}
and use $\mathcal G$ as a left preconditioner for $\mcal A$. The preconditioned system is 
\begin{equation}\label{eq:pre_cond_1}
\mathcal G\mathcal A = \mathcal U^{-1}(\mathcal L\, \mathcal H )^{-1} \mathcal A.
\end{equation}
Another preconditioned system is defined as
\begin{equation}\label{eq:pre_cond_2}
(\mathcal L\, \mathcal H)^{-1}\mathcal A\, \mathcal U^{-1}.
\end{equation}
That is we apply a left preconditioner $(\mathcal L\, \mathcal H) ^{-1}$  and a right preconditioner $\mathcal U^{-1}$ to $\mathcal A$. 
Obviously these two preconditioners will have identical computation cost in each step. 

As non-SPD operators used in the preconditioners, we shall apply the generalized minimal residual method (GMRes) to $\mcal G\mcal A$ or $(\mathcal L\, \mathcal H)^{-1}\mathcal A\, \mathcal U^{-1}$. 

To prove convergence of GMRes, a bound of the so-called field-of-values- (FOV-) equivalence~\cite{Elman1982,Loghin;Wathen2004} is needed. To do so, we impose the following assumptions on the spectrum: there exist constants $0< \mu_1\leq \mu_2 <2$ and $0<\kappa_1 \leq \kappa_2$ so that
\begin{align}\label{eq:ass_equ}
\mu_1 R_A^{-1}  &\leq  A \leq \mu_2 R_A^{-1},\\
\label{eq:schur_equ} \kappa_1 R_S^{-1} &\leq  S \leq \kappa_2 R_S^{-1}.
\end{align}
As we shall show in a moment, assumption \eqref{eq:ass_equ} is equivalent to $\delta = \rho (I - R_A A)< 1$. Here we introduce constants $ \mu_1, \mu_2$ as they will appear in the estimate. For Schur complement, assumption \eqref{eq:schur_equ} always holds with $\kappa_1 = \lambda_{\min}(R_SS)$ and $\kappa_2= \lambda_{\max}(R_SS)$. 

Let $E = I - R_A A$. Condition \eqref{eq:ass_equ} implies that the iterative method using $R_A$ to solve $A^{-1}$ is convergent. More precisely, we have the following spectrum estimates.
\begin{lemma}\label{lem:spec_E}
Let $\delta = \rho (I - R_A A)$. Assume \eqref{eq:ass_equ} holds, then
\begin{align}
\|E\|_{R_A^{-1}} &= \delta <1,\\
\label{eq:ERA} E^{\intercal} R_A^{-1} E & \leq \delta^2 R_A^{-1}, \\
\label{eq:ET} E \, R_A \, E^{\intercal} & \leq \delta^2 R_A.
\end{align}
\end{lemma}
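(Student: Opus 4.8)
The plan is to prove Lemma~\ref{lem:spec_E} by reducing everything to the symmetrized, scaled operator. First I would introduce $\tilde E = R_A^{-1/2} E R_A^{1/2} = I - R_A^{1/2} A R_A^{1/2}$, which is symmetric (in the standard $l_2$ inner product) because $R_A^{1/2} A R_A^{1/2}$ is symmetric. Since $\tilde E$ is similar to $E$, they share the same spectrum, so $\rho(\tilde E) = \rho(E) = \delta$; and because $\tilde E$ is symmetric, $\|\tilde E\|_{l_2} = \rho(\tilde E) = \delta$. The first claim $\|E\|_{R_A^{-1}} = \delta$ then follows from the elementary identity $\|E x\|_{R_A^{-1}} = \|R_A^{-1/2} E x\|_{l_2} = \|\tilde E R_A^{-1/2} x\|_{l_2} \le \|\tilde E\|_{l_2}\, \|R_A^{-1/2} x\|_{l_2} = \delta \|x\|_{R_A^{-1}}$, with equality attained on the eigenvector of $\tilde E$ realizing the spectral radius; this is exactly the "if $A$ and $M$ are symmetric then $AM$ is symmetric in $(\cdot,\cdot)_M$" remark from the introduction applied to $M = R_A^{-1}$.

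Next I would argue that $\delta < 1$ follows from assumption~\eqref{eq:ass_equ}: $\mu_1 R_A^{-1} \le A \le \mu_2 R_A^{-1}$ with $0 < \mu_1 \le \mu_2 < 2$ conjugates to $\mu_1 I \le R_A^{1/2} A R_A^{1/2} \le \mu_2 I$, hence every eigenvalue $\theta$ of $R_A^{1/2} A R_A^{1/2}$ lies in $[\mu_1, \mu_2] \subset (0,2)$, so every eigenvalue $1-\theta$ of $\tilde E$ lies in $(1-\mu_2, 1-\mu_1) \subset (-1,1)$, giving $\delta = \max |1-\theta| < 1$. (Conversely $\delta < 1$ forces $0 < 1-\delta \le \theta \le 1+\delta < 2$, so one may take $\mu_1 = 1-\delta$, $\mu_2 = 1+\delta$; this is the equivalence remarked after~\eqref{eq:schur_equ}, though only the forward direction is needed here.)

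For the two matrix inequalities~\eqref{eq:ERA} and~\eqref{eq:ERA}, I would again conjugate. Inequality $E^{\intercal} R_A^{-1} E \le \delta^2 R_A^{-1}$ is equivalent, after multiplying by $R_A^{1/2}$ on both sides, to $R_A^{1/2} E^{\intercal} R_A^{-1} E R_A^{1/2} \le \delta^2 I$, and the left-hand side is $(\tilde E)^{\intercal} \tilde E = \tilde E^2$ (using symmetry of $\tilde E$), whose largest eigenvalue is $\rho(\tilde E)^2 = \delta^2$; done. For~\eqref{eq:ERA}, i.e. $E R_A E^{\intercal} \le \delta^2 R_A$, I would conjugate by $R_A^{-1/2}$ instead: this is equivalent to $R_A^{-1/2} E R_A E^{\intercal} R_A^{-1/2} \le \delta^2 I$, and the left-hand side is $\tilde E \tilde E^{\intercal} = \tilde E^2$ again, with the same bound.

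I do not anticipate a genuine obstacle here — the lemma is essentially the observation that conjugation by $R_A^{\pm 1/2}$ turns $E$ into a symmetric matrix and that all three norms/orderings in the statement are the $R_A^{-1}$- or $R_A$-weighted versions of the $l_2$ operator norm, which for a symmetric matrix equals its spectral radius. The only point requiring mild care is keeping track of which power of $R_A$ to conjugate by for~\eqref{eq:ERA} versus~\eqref{eq:ERA} (the $R_A^{-1}$-weighted inequality wants $R_A^{1/2}$ on the outside, the $R_A$-weighted one wants $R_A^{-1/2}$), and noting that $E^{\intercal}$ conjugates to $\tilde E^{\intercal} = \tilde E$ so that both products collapse to $\tilde E^2$.
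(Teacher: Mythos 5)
Your proof is correct and is essentially the paper's argument in different clothing: the paper works directly with the facts that $E$ is symmetric in $(\cdot,\cdot)_{R_A^{-1}}$ and $E^{\intercal}$ is symmetric in $(\cdot,\cdot)_{R_A}$, which is exactly what your conjugation by $R_A^{\pm 1/2}$ to the symmetric matrix $\tilde E$ makes explicit, and both routes reduce all three claims to $\rho(\tilde E)=\delta<1$. The only blemish is notational: you cite \eqref{eq:ERA} twice where the second occurrence should be \eqref{eq:ET}, but the computations for both inequalities are correctly carried out.
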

\begin{proof}
Equation \eqref{eq:ass_equ} implies that $\lambda_{\min}( R_AA) \geq \mu_1$ and $\lambda_{\max}(R_AA) \leq \mu_2$. Since $E$ is symmetric in the inner product $(\cdot,\cdot)_{R_A^{-1}}$, it holds
$$
\|E\|_{R_A^{-1}} = \max\{|1 - \lambda_{\min}( R_AA)|, |1 -\lambda_{\max}( R_AA)| \} \leq \max \{ |1- \mu_1 |, |1- \mu_2|\} < 1.
$$
For any $u \in \mathbb R^n$, we have
$$
\|Eu\|_{R_A^{-1}} \leq \|E\|_{R_A^{-1}}\|u\|_{R_A^{-1}} \leq \delta\|u\|_{R_A^{-1}},
$$
which implies \eqref{eq:ERA}. Similarly, using the fact that $E^{\intercal}$ is symmetric in the inner product $(\cdot,\cdot)_{R_A}$, we can prove \eqref{eq:ET}.
\end{proof}

The following lemma gives a bound of $B^{\intercal}S^{-1}B$.
\begin{lemma}\label{lm:BAB}
We have the inequality
\begin{equation}\label{BAB}
B^{\intercal}S^{-1} B \leq R_A^{-1}.
\end{equation}
\end{lemma}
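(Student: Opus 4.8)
The plan is to prove the inequality $B^{\intercal}S^{-1}B \leq R_A^{-1}$ by a duality/supremum argument, mirroring the technique already used in this paper for Lemma with inequality \eqref{eq:RSB} (where $B^{\intercal}R_S B \leq \bar R_A^{-1}$ was established). First I would fix an arbitrary $v \in \mathbb R^n$ and set $q = S^{-1}B v \in \mathbb R^m$, so that $(B^{\intercal}S^{-1}Bv,v) = (S^{-1}Bv, Bv) = (Sq,q) = \|q\|_S^2$. The goal is then to show $\|q\|_S^2 \leq \|v\|_{R_A^{-1}}^2$, i.e., $\|q\|_S \leq \|v\|_{R_A^{-1}}$.

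Next I would express $\|q\|_S$ as a supremum over the dual norm: since $S$ is SPD,
\[
\|q\|_S = \sup_{p \in \mathbb R^m} \frac{(q,p)_S}{\|p\|_S} = \sup_{p \in \mathbb R^m} \frac{(Sq,p)}{\|p\|_S} = \sup_{p \in \mathbb R^m} \frac{(Bv,p)}{\|p\|_S} = \sup_{p \in \mathbb R^m} \frac{(v, B^{\intercal}p)}{\|p\|_S}.
\]
Then I would apply the Cauchy–Schwarz inequality in the $(\cdot,\cdot)_{R_A^{-1}}$ inner product to the numerator, writing $(v, B^{\intercal}p) = (v, B^{\intercal}p)_{(R_A^{-1})(R_A)}$ more carefully as $(R_A^{-1/2}v, R_A^{1/2}B^{\intercal}p) \leq \|v\|_{R_A^{-1}} \|B^{\intercal}p\|_{R_A}$, which gives
\[
\|q\|_S \leq \|v\|_{R_A^{-1}} \sup_{p \in \mathbb R^m} \frac{\|B^{\intercal}p\|_{R_A}}{\|p\|_S}.
\]

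The key remaining point is that the supremum factor is bounded by $1$, i.e., $\|B^{\intercal}p\|_{R_A} \leq \|p\|_S$ for all $p$. This follows from the definition $S = BR_AB^{\intercal} + C$: for any $p$ we have $\|B^{\intercal}p\|_{R_A}^2 = (R_A B^{\intercal}p, B^{\intercal}p) = (BR_AB^{\intercal}p, p) \leq (BR_AB^{\intercal}p, p) + (Cp,p) = (Sp,p) = \|p\|_S^2$, where we used that $C$ is positive semi-definite. Combining everything yields $\|q\|_S \leq \|v\|_{R_A^{-1}}$, hence $(B^{\intercal}S^{-1}Bv, v) \leq \|v\|_{R_A^{-1}}^2 = (R_A^{-1}v,v)$ for all $v$, which is exactly \eqref{BAB}.

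I do not expect any genuine obstacle here; the argument is a routine duality computation entirely parallel to the proof of \eqref{eq:RSB} earlier in the paper. The only mild care needed is in setting up the Cauchy–Schwarz step with matching $R_A^{1/2}$ / $R_A^{-1/2}$ factors so that the $R_A$-norm on $B^{\intercal}p$ and the $R_A^{-1}$-norm on $v$ appear correctly; everything else is the semi-definiteness of $C$ and the definition of the Schur complement $S$.
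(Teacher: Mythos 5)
Your proposal is correct and follows essentially the same duality argument as the paper's own proof: setting $q = S^{-1}Bv$, expressing $\|q\|_S$ as a supremum, applying Cauchy--Schwarz in the $R_A$-weighted pairing, and concluding with $\|B^{\intercal}p\|_{R_A} \leq \|p\|_S$ from $S = BR_AB^{\intercal}+C$ and the semi-definiteness of $C$. No differences worth noting.
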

\begin{proof}
For any $v \in \mathbb R^n$, let $q = S^{-1} B v\in \mathbb R^m$. Then
\begin{align*}
(  B^{\intercal} S^{-1} B v, v) =(  S^{-1} B v, B v) = ( S q, q) = \|q\|_{S}^{2}.
\end{align*}
Note that
\begin{align*}
\|q\|_{S} &= \sup_{p\in \mathbb R^m}\dfrac{( q,p) _{S}}{\|p\|_{S}}
= \sup_{p\in \mathbb R^m}\dfrac{( B v,p)}{\|p\|_{S}}
= \sup_{p\in \mathbb R^m}\dfrac{( v,B^{\intercal}p)}{\|p\|_{S}} \\
&\leq \sup_{p\in \mathbb R^m}\dfrac{\|v\|_{R_A^{-1}}\|B^{\intercal}p\|_{R_A}}{\|p\|_{S}}
\leq \|v\|_{R_A^{-1}}.
\end{align*}
In the last step, we have used the definition $S = B\, R_AB^{\intercal}+C$ which implies $\|B^{\intercal}p\|_{R_A}\leq \|p\|_{S}$.
The desired result \eqref{BAB} then follows easily. 
\end{proof}

We estimate the field of values equivalence of preconditioned system $(\mathcal L\, \mathcal H) ^{-1}\mathcal A\, \mathcal U^{-1}$.

\begin{lemma}\label{lem:fiels_pre}
Assume \eqref{eq:ass_equ} and \eqref{eq:schur_equ} hold,
then for all non-zero element $\bs x = (u, p)^{\intercal} \in \mathbb R^n \times \mathbb R^m$, it holds
\begin{align}\label{eq:FoE1}
& \frac{( \mathcal H^{-1} \mathcal L^{-1} \mathcal A\, \mathcal U^{-1} \bs x, \bs x )_{\mathcal D}}{(\bs x, \bs x )_{\mathcal D}} \geq \gamma,\\
\label{eq:FoE2} &\frac{\|\mathcal H^{-1} \mathcal L^{-1}\mathcal A\, \mathcal U^{-1} \bs x\|_{\mathcal D}}{\|\bs x\|_{\mathcal D}} \leq \Gamma,
\end{align}  
where $\gamma = \min\left\{\mu_1, \min\{2 - \mu_2, 1 \} \kappa_1  \right\}$ and $\Gamma = \left( 2 \max\left\{ \mu^2_2 + \kappa_2\delta^2, 2\kappa_2^2(1 + \delta^2) + \kappa_2\delta^2\right\}\right)^{1/2}$. 
\end{lemma}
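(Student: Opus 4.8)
The plan is to work with the explicit block form of the operator $\mathcal H^{-1}\mathcal L^{-1}\mathcal A\,\mathcal U^{-1}$ in the $\mathcal D$-inner product and reduce both bounds to quantities already controlled by Lemmas~\ref{lem:spec_E} and~\ref{lm:BAB}. First I would compute $\mathcal H^{-1}\mathcal L^{-1}\mathcal A\,\mathcal U^{-1}$ directly. Using $\mathcal A = \hat{\mathcal A} - \mathcal E_{\mathcal A} = \mathcal L\mathcal H\mathcal U - \mathcal E_{\mathcal A}$ with $\mathcal E_{\mathcal A} = \mathrm{diag}(R_A^{-1}-A,\; S - R_S^{-1})$ (as in Lemma~\ref{lem:F}), one gets
$$
\mathcal H^{-1}\mathcal L^{-1}\mathcal A\,\mathcal U^{-1} = \mathcal I - \mathcal H^{-1}\mathcal L^{-1}\mathcal E_{\mathcal A}\,\mathcal U^{-1} = \mathcal I - \mathcal E,
$$
and then I would transform to the $\mathcal D^{1/2}$-scaled coordinates, writing $\bs y = \mathcal D^{1/2}\bs x$, so that the $\mathcal D$-inner product becomes the Euclidean one and the operator becomes $\mathcal D^{1/2}(\mathcal I - \mathcal E)\mathcal D^{-1/2}$. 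Recalling from \eqref{eq:E} that $\mathcal E = \mathcal D^{-1}\mathcal J\mathcal L^{-1}\mathcal E_{\mathcal A}\mathcal U^{-1}$ and from Lemma~\ref{lem:F} that $\mathcal D^{1/2}\mathcal E\mathcal D^{-1/2} = \mathcal J\mathcal F\mathcal J$ with $\mathcal F$ given by \eqref{eq:F}, the scaled operator is $\mathcal I - \mathcal J\mathcal F\mathcal J$, whose block form is
$$
\begin{pmatrix} I - E_A & \bar B^{\intercal} \\ -\bar B & I + E_{\bar S}\end{pmatrix}
=
\begin{pmatrix} R_A^{1/2}AR_A^{1/2} & \bar B^{\intercal} \\ -\bar B & 2I - R_S^{1/2}\bar S R_S^{1/2}\end{pmatrix}.
$$

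For the coercivity bound \eqref{eq:FoE1}, I would take $\bs y = (\tilde u, \tilde p)^{\intercal}$ and compute the real part of the quadratic form: the off-diagonal blocks $\bar B^{\intercal}$ and $-\bar B$ are skew-paired, so their contribution to $(\,\cdot\,,\bs y)$ cancels, leaving
$$
\big((\mathcal I - \mathcal J\mathcal F\mathcal J)\bs y,\bs y\big) = (R_A^{1/2}AR_A^{1/2}\tilde u,\tilde u) + \big((2I - R_S^{1/2}\bar S R_S^{1/2})\tilde p,\tilde p\big).
$$
The first term is $\geq \mu_1\|\tilde u\|^2$ by \eqref{eq:ass_equ}. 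For the second term I would use $R_S^{1/2}\bar S R_S^{1/2} \leq (1+\delta)R_S^{1/2}S R_S^{1/2}$ from \eqref{eq:S_S_bar} together with $R_S^{1/2}S R_S^{1/2} \geq \kappa_1 I$ and $R_S^{1/2}S R_S^{1/2} \leq \kappa_2 I$ from \eqref{eq:schur_equ}; combined with $1+\delta \le 2-\mu_2$ (wait — more precisely I would relate $\delta$ to $\mu_2$ via $\delta = \max\{1-\mu_1,\mu_2-1\}$ from Lemma~\ref{lem:spec_E}, so $1+\delta$ may exceed $2$, and instead I would bound $2I - R_S^{1/2}\bar S R_S^{1/2}$ from below directly). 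The cleanest route is: $2I - R_S^{1/2}\bar S R_S^{1/2} \geq (2 - \kappa_2(1+\delta))I$ is not obviously positive, so I would instead bound the $\bar S$ term using only $R_S^{-1}\ge\bar S$-type control when available, or fall back to writing $2I - R_S^{1/2}\bar S R_S^{1/2} = I + (I - R_S^{1/2}\bar S R_S^{1/2})$ and arguing the second summand lies in $[\,-\bar\gamma, \text{something}\,]$; matching the claimed constant $\gamma = \min\{\mu_1,\min\{2-\mu_2,1\}\kappa_1\}$ suggests the intended argument uses $R_S^{1/2}\bar S R_S^{1/2}\le (2-\mu_2)\cdot(\text{stuff})$ via $\bar R_A = 2R_A - R_AAR_A \ge (2-\mu_2)R_A$ coordinatewise, hence $\bar S \le$ ... — I would chase exactly that chain to land on $\gamma$.

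For the boundedness bound \eqref{eq:FoE2}, I would estimate $\|(\mathcal I - \mathcal J\mathcal F\mathcal J)\bs y\|^2$ block by block. The top block contributes $\|R_A^{1/2}AR_A^{1/2}\tilde u + \bar B^{\intercal}\tilde p\|^2 \le 2\|R_A^{1/2}AR_A^{1/2}\tilde u\|^2 + 2\|\bar B^{\intercal}\tilde p\|^2$; here $\|R_A^{1/2}AR_A^{1/2}\| \le \mu_2$ handles the first piece, and $\bar B^{\intercal}\bar B = E_A R_A^{1/2}B^{\intercal}R_S B R_A^{1/2}E_A$, bounded using $\|E_A\|\le 1$ and $B^{\intercal}R_S B\le \kappa_2 \cdot$(Schur bound via Lemma~\ref{lm:BAB}-type argument) to get a $\kappa_2\delta^2$-type term — actually $\bar B \bar B^{\intercal}\le \delta R_S^{1/2}SR_S^{1/2}\le \kappa_2\delta I$ hmm, I would track the exact power of $\delta$ against \eqref{eq:E_A}. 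The bottom block contributes $\|-\bar B\tilde u + (2I - R_S^{1/2}\bar S R_S^{1/2})\tilde p\|^2 \le 2\|\bar B\tilde u\|^2 + 2\|(2I - R_S^{1/2}\bar S R_S^{1/2})\tilde p\|^2$, where the operator norm of $2I - R_S^{1/2}\bar S R_S^{1/2}$ is at most $\max\{2, \kappa_2(1+\delta) - 2\}\le 2\kappa_2(1+\delta)$-ish, yielding the $2\kappa_2^2(1+\delta^2)$ term. Summing and taking the worst of the two block contributions gives $\Gamma^2 = 2\max\{\mu_2^2 + \kappa_2\delta^2,\; 2\kappa_2^2(1+\delta^2) + \kappa_2\delta^2\}$.

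The main obstacle I anticipate is the coercivity bound: the cancellation of the off-diagonal terms is immediate, but getting a clean positive lower constant out of $2I - R_S^{1/2}\bar S R_S^{1/2}$ requires threading $\delta$, $\mu_2$, and the definition of $\bar R_A$ together correctly (the relation $\bar R_A \le (2-\mu_1)R_A$ versus $\ge(2-\mu_2)R_A$ and how that propagates through $B(\cdot)B^{\intercal}+C$), and one must be careful that $2-\mu_2 > 0$ is exactly where the hypothesis $\mu_2 < 2$ is used. The boundedness bound is a routine (if tedious) application of the triangle inequality and the spectral bounds from Lemmas~\ref{lem:spec_E}, and its only subtlety is keeping the powers of $\delta$ consistent with \eqref{eq:E_A}.
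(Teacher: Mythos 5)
Your overall strategy---conjugate by $\mathcal D^{1/2}$, use skew-symmetry of the off-diagonal blocks for the coercivity bound, and a triangle-inequality/block estimate for the upper bound---is exactly the paper's approach. But there is a concrete error in the very first step that derails the coercivity argument. The correct similarity is $\mathcal D^{1/2}\mathcal E\,\mathcal D^{-1/2}=\mathcal F\mathcal J$, not $\mathcal J\mathcal F\mathcal J$ (since $\mathcal E=\mathcal D^{-1}\mathcal J\mathcal L^{-1}\mathcal E_{\mathcal A}\mathcal U^{-1}$ and $\mathcal J$ commutes with $\mathcal D^{\pm 1/2}$, one factor of $\mathcal J$ is absorbed). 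Consequently the scaled operator is
$$
\mathcal I-\mathcal F\mathcal J=\begin{pmatrix} I-E_A & \bar B^{\intercal}\\ -\bar B & I-E_{\bar S}\end{pmatrix}
=\begin{pmatrix} R_A^{1/2}AR_A^{1/2} & \bar B^{\intercal}\\ -\bar B & R_S^{1/2}\bar S R_S^{1/2}\end{pmatrix},
$$
i.e.\ the $(2,2)$ block is $R_S^{1/2}\bar S R_S^{1/2}$ (equivalently $R_S^{1/2}(S+BER_AB^{\intercal})R_S^{1/2}$, which the paper works with directly), \emph{not} $2I-R_S^{1/2}\bar S R_S^{1/2}$. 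Your difficulty in bounding $2I-R_S^{1/2}\bar S R_S^{1/2}$ from below is a symptom of this sign error: that quantity need not be positive when $\kappa_2$ is large, whereas $R_S^{1/2}\bar S R_S^{1/2}$ has an unconditional lower bound. With the correct block, the chain you only gesture at in parentheses is precisely the argument: $\bar R_A=2R_A-R_AAR_A\geq(2-\mu_2)R_A$, hence $\bar S=C+B\bar R_AB^{\intercal}\geq C+(2-\mu_2)BR_AB^{\intercal}\geq\min\{2-\mu_2,1\}S\geq\min\{2-\mu_2,1\}\kappa_1R_S^{-1}$, which together with $(A u,u)\geq\mu_1\|u\|_{R_A^{-1}}^2$ and the cancellation of the skew off-diagonal part gives $\gamma$. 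As written, your coercivity proof is left explicitly unfinished (``I would chase exactly that chain''), so this half of the lemma is not established.

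The upper bound is closer to complete, but the same wrong $(2,2)$ block would not reproduce the constant $2\kappa_2^2(1+\delta^2)$; with the correct block one bounds $(S+BER_AB^{\intercal})R_S(S+BER_AB^{\intercal})\leq 2\kappa_2(S+\delta^2BR_AB^{\intercal})\leq 2\kappa_2^2(1+\delta^2)R_S^{-1}$ using Lemma~\ref{lm:BAB} and \eqref{eq:ERA}. Also, the power of $\delta$ you were unsure about is settled by \eqref{eq:ET}: $BER_AE^{\intercal}B^{\intercal}\leq\delta^2BR_AB^{\intercal}\leq\delta^2S\leq\kappa_2\delta^2R_S^{-1}$ (and symmetrically $E^{\intercal}B^{\intercal}R_SBE\leq\kappa_2\delta^2R_A^{-1}$ via Lemma~\ref{lm:BAB}), giving the $\kappa_2\delta^2$ terms; it is $\delta^2$, not $\delta$.
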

\begin{proof}
By direct calculation, we have
$$
\mathcal H^{-1} \mathcal L^{-1}\mathcal A\, \mathcal U^{-1} = \mathcal D^{-1}
\begin{pmatrix}
A  &  E^{\intercal}B^{\intercal}  \\ -BE  & S + BER_AB^{\intercal} 
\end{pmatrix}.
$$
Note that the sign change of the second row if we replace $\mathcal H$ by $\mathcal D$. Then
we have
\begin{align*}
( \mathcal H^{-1} \mathcal L^{-1}\mathcal A\, \mathcal U^{-1}\bs x, \bs x)_{\mathcal D} & = \|u\|_{A}^2 + ( (S + BER_AB^{\intercal} ) p, p ). 
\end{align*}
The first term is easy: $\|u\|_{A}^2 \geq \mu_1 \|u\|_{R_A^{-1}}$ by assumption. 
By the definition of $S$ and the assumption \eqref{eq:ass_equ}, we have
\begin{align*}
S + BER_AB^{\intercal} & = S + B\, R_A \, B^{\intercal} - B\, R_A A R_A B^{\intercal} \\
& =  C + B(2R_A- R_A A R_A)B^{\intercal} \\
& \geq  C + (2 - \mu_2)B R_A B^{\intercal} \\
& \geq \min\{2 - \mu_2, 1 \} S \\
& \geq \min\{2 - \mu_2, 1 \}\kappa_1 R_S^{-1}.
\end{align*}
Thus, inequality \eqref{eq:FoE1} follows with $\gamma = \min\left\{\mu_1, \min\{2 - \mu_2, 1 \} \kappa_1  \right\}$.

To prove the upper bound, we split it as
\begin{align}\label{eq:Uper_bound}
\begin{split}
\|\mathcal H^{-1} \mathcal L^{-1}\mathcal A\, \mathcal U^{-1} \bs x\|_{\mathcal D} & \leq  \left\|  
\mathcal D^{-1} \begin{pmatrix}
A  & 0 \\ 0 & S + BER_AB^{\intercal}
\end{pmatrix} \bs x
\right\|_{\mathcal D} \\
&\quad  + \left\| \mathcal D^{-1}
\begin{pmatrix}
0  & -E^{\intercal}B^{\intercal} \\ BE & 0
\end{pmatrix}\bs x
 \right\|_{\mathcal D}. 
\end{split}
\end{align}
We first estimate the diagonal part of \eqref{eq:Uper_bound}. For $u$ part, it is an easy consequence of 
\begin{align*}
( R_A Au,Au) & = ( A\, R_A A u,u) \leq \mu_2( Au,u) \leq \mu^2_2\|u\|_{R_A^{-1}}^2.
\end{align*}
For $ p$ part, it holds
\begin{align*}
(S  + B\, R_AE^{\intercal}B^{\intercal}) R_S (S  + B ER_AB^{\intercal}) 
& \leq 2\kappa_2 \left(S S^{-1} S  + B\, R_AE^{\intercal}B^{\intercal}S^{-1} BER_A B^{\intercal}\right) \\
&\leq 2\kappa_2 \left(S + B\, R_A E^{\intercal}R_A^{-1} E R_A B^{\intercal}  \right) \\
& \leq 2\kappa_2 \left( S + \delta^2 B\, R_A B^{\intercal} \right) \\
& \leq 2\kappa_2 (1 + \delta^2) S \\
& \leq 2\kappa_2^2 (1 + \delta^2) R_S^{-1}.
\end{align*}
where in the second inequality, we have used \eqref{BAB}, and in the third inequality, we have used \eqref{eq:ERA}.
Now, we turn to the second term in \eqref{eq:Uper_bound}. For $ p$ part, we have 
$$
BE \, R_A \, E^{\intercal}B^{\intercal} \leq \delta^2 B\, R_A B^{\intercal} \leq \delta^2 S \leq \kappa_2 \delta^2 R_S^{-1}.
$$
For $u$ part, we have
$$
E^{\intercal}B^{\intercal}R_SBE \leq \kappa_2 E^{\intercal}B^{\intercal}S^{-1} BE \leq \kappa_2 E^{\intercal}R_A^{-1}E \leq \kappa_2\delta^2 R_A^{-1}.
$$
Thus, the inequality \eqref{eq:FoE2} follows with 
$$\Gamma =\left [ 2 \max\left\{ \mu^2_2 + \kappa_2\delta^2, 2\kappa_2^2(1 + \delta^2) + \kappa_2\delta^2\right\}\right ]^{1/2}.$$
\end{proof}

We turn to the estimate of the field-of-values of preconditioned system $\mathcal G \mathcal A$. With an appropriately chosen norm, we can obtain similar results.
\begin{lemma}\label{lem:field_value_GL}
Assume  \eqref{eq:ass_equ} and \eqref{eq:schur_equ} hold, then for any non-zero element $\bs x = (u, p)^{\intercal} \in \mathbb R^n \times \mathbb R^m$, it holds
\begin{equation}
\frac{( \mathcal G\mathcal A\bs x,\bs x)_{\mathcal L\mathcal D \mathcal U}}{\|\bs x\|_{\mathcal L\mathcal D \mathcal U}^2} \geq \gamma,\quad\text{and}\quad \frac{\|\mathcal G\mathcal A\bs x\|_{\mathcal L\mathcal D \mathcal U}}{\|\bs x\|_{\mathcal L\mathcal D \mathcal U}} \leq \Gamma,
\end{equation}
where $\gamma$ and $\Gamma$ are given in Lemma \ref{lem:fiels_pre}.
\end{lemma}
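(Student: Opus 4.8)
The plan is to reduce the field-of-values estimates for $\mathcal G\mathcal A$ in the $\|\cdot\|_{\mathcal L\mathcal D\mathcal U}$ norm to the already-proven estimates for $(\mathcal L\mathcal H)^{-1}\mathcal A\,\mathcal U^{-1}$ in the $\|\cdot\|_{\mathcal D}$ norm from Lemma~\ref{lem:fiels_pre}, by observing that the two preconditioned operators are similar via the change of basis $\mathcal U$. First I would record the algebraic identity
\begin{equation*}
\mathcal G\mathcal A = \mathcal U^{-1}(\mathcal L\mathcal H)^{-1}\mathcal A = \mathcal U^{-1}\bigl[(\mathcal L\mathcal H)^{-1}\mathcal A\,\mathcal U^{-1}\bigr]\mathcal U,
\end{equation*}
so that $\mathcal G\mathcal A$ and $(\mathcal L\mathcal H)^{-1}\mathcal A\,\mathcal U^{-1}$ are conjugate by $\mathcal U$. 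Recalling $\mathcal L=\mathcal U^{\intercal}$, the weight matrix transforms as $\mathcal U^{\intercal}\mathcal D\,\mathcal U=\mathcal L\mathcal D\,\mathcal U$, so the $\mathcal L\mathcal D\mathcal U$-inner product of $\mathcal U$-images equals the $\mathcal D$-inner product of the originals.

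Concretely, for nonzero $\bs x$ set $\bs y=\mathcal U\bs x$. Then $\|\bs x\|_{\mathcal L\mathcal D\mathcal U}^2=(\mathcal L\mathcal D\mathcal U\bs x,\bs x)=(\mathcal D\,\mathcal U\bs x,\mathcal U\bs x)=\|\bs y\|_{\mathcal D}^2$, and likewise
\begin{equation*}
(\mathcal G\mathcal A\bs x,\bs x)_{\mathcal L\mathcal D\mathcal U}=(\mathcal L\mathcal D\mathcal U\,\mathcal U^{-1}(\mathcal L\mathcal H)^{-1}\mathcal A\,\mathcal U^{-1}\mathcal U\bs x,\bs x)=(\mathcal D\,(\mathcal L\mathcal H)^{-1}\mathcal A\,\mathcal U^{-1}\bs y,\bs y)=\bigl((\mathcal L\mathcal H)^{-1}\mathcal A\,\mathcal U^{-1}\bs y,\bs y\bigr)_{\mathcal D},
\end{equation*}
using $(\mathcal L\mathcal H)^{-1}=\mathcal H^{-1}\mathcal L^{-1}$. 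The same substitution shows $\|\mathcal G\mathcal A\bs x\|_{\mathcal L\mathcal D\mathcal U}=\|\mathcal U\,\mathcal G\mathcal A\bs x\|_{\mathcal D}=\|(\mathcal L\mathcal H)^{-1}\mathcal A\,\mathcal U^{-1}\bs y\|_{\mathcal D}$. Since $\mathcal U$ is invertible, $\bs y$ ranges over all nonzero vectors as $\bs x$ does, so the desired lower bound $\gamma$ and upper bound $\Gamma$ follow immediately by applying Lemma~\ref{lem:fiels_pre} to $\bs y$.

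I do not anticipate a genuine obstacle here; the content is entirely in spotting the conjugacy and keeping track of how the quadratic-form weight transforms under $\mathcal U$ (i.e., that $\mathcal U^{\intercal}\mathcal D\mathcal U=\mathcal L\mathcal D\mathcal U$ is exactly the right weight to make the similarity an isometry). The only point requiring a line of care is confirming that $\mathcal L\mathcal D\mathcal U$ is indeed symmetric positive definite so that it defines a legitimate inner product — this is clear since $\mathcal D$ is SPD and $\mathcal L=\mathcal U^{\intercal}$ is invertible, hence $\mathcal L\mathcal D\mathcal U=\mathcal U^{\intercal}\mathcal D\mathcal U$ is a congruence of an SPD matrix. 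One should also note that the numerator in the first inequality uses the real part implicitly, but since everything is expressed through a fixed similarity the bound transfers verbatim. Thus the proof is a short computation with no substantive difficulty.
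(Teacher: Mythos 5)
Your proof is correct and follows essentially the same route as the paper: both arguments reduce the claim to Lemma~\ref{lem:fiels_pre} via the substitution $\bs y = \mathcal U\bs x$, using $\mathcal L = \mathcal U^{\intercal}$ so that $\|\bs x\|_{\mathcal L\mathcal D\mathcal U} = \|\mathcal U\bs x\|_{\mathcal D}$ and the $\mathcal L\mathcal D\mathcal U$-forms of $\mathcal G\mathcal A$ become the $\mathcal D$-forms of $\mathcal H^{-1}\mathcal L^{-1}\mathcal A\,\mathcal U^{-1}$. Your added remark that $\mathcal L\mathcal D\mathcal U = \mathcal U^{\intercal}\mathcal D\,\mathcal U$ is SPD (hence a legitimate inner product) is a point the paper leaves implicit.
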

\begin{proof}
For any $\bs x = (u, p)^{\intercal} \in \mathbb R^n \times \mathbb R^m$, it is easy to see that
\begin{align*}
( \mathcal G \mathcal A \bs x,\bs x)_{\mathcal L\mathcal D \mathcal U} & = ( \mathcal L \mathcal D \mathcal H^{-1} \mathcal L^{-1} \mathcal A \bs x,\bs x) = (   \mathcal H^{-1}\mathcal L^{-1} \mathcal A\, \mathcal U^{-1} \mathcal U\bs x,\mathcal U\bs x)_{\mathcal D}.
\end{align*}
Then use inequality \eqref{eq:FoE1}, we have
$$
( \mathcal G\mathcal A\bs x,\bs x )_{\mathcal L \mathcal D\mathcal U} \geq \gamma \|\, \mathcal U\bs x\|_{\mathcal D}^2 = \gamma \| \bs x\|_{\mathcal L \mathcal D\mathcal U}^2.
$$

Similarly we use
\begin{align*}
\|\mathcal G\mathcal A\bs x\|_{\mathcal L\mathcal D\mathcal U} & = \|\mathcal H^{-1} \mathcal L^{-1} \mathcal A\, \mathcal U^{-1} \mathcal U \bs x\|_{\mathcal D}
\end{align*}
and inequality \eqref{eq:FoE2}, to get
$$
\|\mathcal G\mathcal A\bs x\|_{\mathcal L\mathcal D \mathcal U} \leq  \Gamma \| \mathcal U \bs x\|_{\mathcal D}\leq \Gamma \|\bs x\|_{\mathcal L \mathcal D \mathcal U}.
$$
\end{proof}

Using \cite{Elman1982,Saad2003}, \cite[Algorithm 2.2]{Loghin;Wathen2004},  Lemma \ref{lem:fiels_pre} and \ref{lem:field_value_GL}, we conclude the convergence of GMRes with these preconditioners.
\begin{theorem}
Under the assumptions \eqref{eq:ass_equ} and \eqref{eq:schur_equ}, {\rm GMRes} method for the preconditioned system $(\mathcal L\, \mathcal H) ^{-1}\mathcal A\mathcal U^{-1}$ (or $\mathcal G \mathcal A$) is convergent in the $\|\cdot\|_{\mathcal D}$ (or $\|\cdot\|_{\mathcal L\mathcal D \mathcal U}$) norm, respectively. 
\end{theorem}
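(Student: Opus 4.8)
The entire substance of this theorem has already been assembled in Lemmas \ref{lem:fiels_pre} and \ref{lem:field_value_GL}: those two lemmas establish that the preconditioned operators $(\mathcal L\,\mathcal H)^{-1}\mathcal A\,\mathcal U^{-1}$ and $\mathcal G\mathcal A$ are \emph{field-of-values equivalent} to the identity in the $\|\cdot\|_{\mathcal D}$ and $\|\cdot\|_{\mathcal L\mathcal D\mathcal U}$ norms respectively, with the explicit constants $\gamma>0$ and $\Gamma<\infty$ from Lemma \ref{lem:fiels_pre}. The plan is therefore to invoke the standard Elman-type convergence estimate for {\rm GMRes}: if a (nonsymmetric) operator $\mathcal K$ acting on a Hilbert space with inner product $(\cdot,\cdot)_*$ satisfies $(\mathcal K\bs x,\bs x)_* \geq \gamma\|\bs x\|_*^2$ and $\|\mathcal K\bs x\|_* \leq \Gamma\|\bs x\|_*$ for all $\bs x\neq 0$, then the residuals $\bs r^k$ of {\rm GMRes} (minimized in the $\|\cdot\|_*$ norm) obey $\|\bs r^k\|_* \leq \bigl(1-\gamma^2/\Gamma^2\bigr)^{k/2}\|\bs r^0\|_*$, hence {\rm GMRes} converges with an iteration-count bound depending only on $\gamma$ and $\Gamma$. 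This is exactly the content we cite from \cite{Elman1982,Saad2003} and \cite[Algorithm 2.2]{Loghin;Wathen2004}.

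First I would state, for the system $(\mathcal L\,\mathcal H)^{-1}\mathcal A\,\mathcal U^{-1}$, that we run {\rm GMRes} with the inner product $(\cdot,\cdot)_{\mathcal D}$ (so that the Arnoldi process is $\mathcal D$-orthogonal and the residual is minimized in $\|\cdot\|_{\mathcal D}$); then Lemma \ref{lem:fiels_pre} supplies precisely the two hypotheses of the Elman estimate with $*=\mathcal D$, and the conclusion follows. Second, for the system $\mathcal G\mathcal A = \mathcal U^{-1}(\mathcal L\,\mathcal H)^{-1}\mathcal A$, I would do the same but in the inner product $(\cdot,\cdot)_{\mathcal L\mathcal D\mathcal U}$, noting that $\mathcal L\mathcal D\mathcal U$ is SPD (it is $\mathcal U^{\intercal}\mathcal D\mathcal U$ since $\mathcal L=\mathcal U^{\intercal}$, and $\mathcal D$ is SPD), so it defines a genuine inner product; Lemma \ref{lem:field_value_GL} then gives the two required bounds with the \emph{same} constants $\gamma,\Gamma$, and the Elman estimate again yields convergence. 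In both cases the geometric rate is at most $\sqrt{1-\gamma^2/\Gamma^2}$, with $\gamma,\Gamma$ as in Lemma \ref{lem:fiels_pre}, which by assumptions \eqref{eq:ass_equ}--\eqref{eq:schur_equ} are strictly positive and finite (here one uses $\mu_2<2$, which guarantees $\gamma>0$, and $\delta<1$, which keeps $\Gamma$ bounded).

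There is essentially no obstacle beyond bookkeeping: the only point requiring a word of care is that the FOV-equivalence bounds in Lemmas \ref{lem:fiels_pre} and \ref{lem:field_value_GL} must be matched to the inner product in which {\rm GMRes} is actually implemented — $\|\cdot\|_{\mathcal D}$ for the two-sided preconditioned system and $\|\cdot\|_{\mathcal L\mathcal D\mathcal U}$ for the left-preconditioned system $\mathcal G\mathcal A$ — and that $\mathcal L\mathcal D\mathcal U$ is indeed SPD so that the weighted {\rm GMRes} theory applies verbatim. Once that observation is recorded, the theorem is an immediate corollary of the cited {\rm GMRes} convergence bound. I would keep the proof to two or three sentences, pointing to Lemmas \ref{lem:fiels_pre} and \ref{lem:field_value_GL} and to \cite{Elman1982,Saad2003,Loghin;Wathen2004} for the abstract estimate.
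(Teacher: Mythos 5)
Your proposal is correct and follows exactly the paper's route: the paper's "proof" consists of citing the Elman/Loghin--Wathen GMRes convergence estimate together with Lemmas \ref{lem:fiels_pre} and \ref{lem:field_value_GL}, which is precisely what you do. Your added remarks that $\mathcal L\mathcal D\mathcal U = \mathcal U^{\intercal}\mathcal D\mathcal U$ is SPD and that the GMRes residual minimization must be carried out in the matching weighted inner product are sensible bookkeeping that the paper leaves implicit.
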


To obtain a uniform bound, i.e, independent of the size of $A$, for $\delta$ we can use a V-cycle multigrid method as $A$ is formed explicitly. In most scenario, we cannot form the Schur complement $S$ explicitly or it is not worth to form and store $S$ explicitly. Then the challenge is to construct a $R_S$ which is spectrally equivalent to $S$ but easier to compute. Applications to mixed finite element methods for elliptic systems can be found in~\cite{Rusten1996Interior,perugia2000block,Loghin;Wathen2004,Chen;Wu;Zhong:Zhou2016,chen2016fast}. 
 

\end{document}